\def\bZ{{\mathbb Z}}
\def\bZ{{\mathbb{Z}}}
\def\bN{{\mathbb{N}}}
\def\cA{{\mathcal A}}
\def\cU{{\mathcal U}}
\def\cK{{\mathcal K}}
\def\cM{{\mathcal M}}
\def\cN{{\mathcal N}}
\def\cC{{\mathcal C}}
\def\cJ{{\mathcal J}}
\def\cD{{\mathcal D}}
\def\g{{\mathfrak g}}
\def\h{{\mathfrak h}}
\def\bb{\bullet \bullet}
\def\b{\bullet}
\def\ov\g{{\overline{\mathfrak{g}}}}
\newtheorem{theorem}{Theorem}[section]
\newtheorem*{theorem*}{Theorem}
\newtheorem{proposition}[theorem]{Proposition}
\newtheorem{corollary}[theorem]{Corollary}
\newtheorem{lemma}[theorem]{Lemma}
\theoremstyle{definition}
\newtheorem{definition}[theorem]{Definition}
\theoremstyle{definition}
\newtheorem{remark}[theorem]{Remark}
\theoremstyle{definition}
\title{On homological reduction of Poisson structures}
\author{Pedro H. Carvalho}
\address{Instituto de Matem\'atica Pura e Aplicada - IMPA}
\email{pedro.carvalho@impa.br} 
\begin{document}

\maketitle

\begin{abstract}
    Given a $\g$-action on a Poisson manifold $(M, \pi)$ and an equivariant map $J: M \rightarrow \h^*,$ for $\h$ a $\g$-module, we obtain, under natural compatibility and regularity conditions previously considered by Cattaneo-Zambon, a homotopy Poisson algebra generalizing the classical BFV algebra  described by Kostant-Sternberg in the usual hamiltonian setting. As an application of our methods, we also derive homological models for the reduced spaces associated to quasi-Poisson and hamiltonian quasi-Poisson spaces.

\end{abstract}

\tableofcontents

\section{Introduction}

Homological methods in symplectic and Poisson geometry can be traced back to the physics papers on quantization of mechanical systems with symmetries (\cite{BRS, BRS1, BRS2, BV, BV1, BV2, BF, BV3}). Subsequently, Kostant-Sternberg \cite{Kostant}, Stasheff \cite{Stasheff} and Kimura \cite{Kimura} unraveled the mathematical content of the constructions that physicists had considered. Nowadays, these techniques, which go by the name of BFV formalism, are well-established and largely used in mathematical physics, being a fundamental tool, e.g., for the treatment of quantization of field theories.

For a  Poisson manifold $(M, \pi)$ endowed with a hamiltonian $\g$-action with moment map $J: M \rightarrow \g^*,$ the BFV algebra, called the BRS algebra in \cite{Kostant}, is a differential super Poisson algebra whose cohomology in degree zero recovers the Poisson algebra of the reduced space associated to the level set $J^{-1}(0),$ for $0 \in \g^*$ a regular value. In this sense, the BFV construction produces homological models for the classical Marsden-Weinstein quotients (\cite{MarsdenRatiu, MarsdenWeinstein}).\par

More general reduction setups for Poisson structures were explored by Marsden-Ratiu (\cite{MarsdenRatiu}) and by Falceto-Zambon (\cite{FalcetoZambon}). In these cases, the input for reduction consists of a submanifold endowed with integrable distribution compatible with a Poisson structure in a suitable way. In \cite{Sharapov}, Lyakhovich-Sharapov considered this, under the name of generic gauge systems, to perform reduction of what they called weak Poisson structures.
In \cite{CattaneoZambon}, on the other hand, Cattaneo-Zambon concretely formulated an extension of the usual hamiltonian setting for the reduction of a Poisson manifold $(M, \pi)$ by considering   a $\g$-action $\psi: \g \rightarrow \mathfrak{X}(M)$ and an equivariant map $J: M \rightarrow \h^*,$ for $\h$ a $\g$-module. In this context, the pair $(\psi, J)$ is said to be 
\begin{itemize}
    \item \textit{regular}, if $0 \in \h^*$ is a regular value of $J: M \rightarrow \h^*$ and the $\g$-action along $C \coloneqq J^{-1}(0)$ integrates to a free and proper action of a Lie group $G;$ 
    \item \textit{compatible} with the Poisson structure $\pi,$ if the space of functions on $M$  whose restrictions to $C$ are $\g$-invariant is closed  %(which restrict to $C$ as $\g$-invariant functions are closed) 
    under the Poisson bracket induced $\pi$ and, moreover, the distribution $\cD  \coloneqq \langle \psi(u) \rangle_{u \in \g} \subset TC$ contains $\pi^{\sharp}(Ann (TC)).$  
\end{itemize}

For a regular and compatible pair $(\psi, J)$ on a Poisson manifold $(M, \pi),$ Cattaneo-Zambon have proved that the reduced space $C_{red} \coloneqq C/G$ inherits a Poisson structure $\pi_{red},$ thereby obtaning a  generalized version of the classical Marsden-Weinstein theorem. The main result of this note provides  the corresponding homological model for this construction, which turns out to be a homotopy version of the BFV algebra described by Kostant-Sternberg in the usual hamiltonian setting (\cite{Kostant}). More precisely, we shall prove the following.

\begin{theorem}
   Let $\psi: \g \rightarrow \mathfrak{X}(M)$ be a Lie algebra action on a Poisson manifold $(M, \pi),$ and let $J: M \rightarrow \h^*$ be a $\g$-equivariant map, for $\h$  a $\g$-module. Assume that the reduction data $(\psi, J)$ is regular and compatible with the Poisson structure $\pi.$  Then, the $\bZ$-graded algebra $$\mathcal{K}^\bullet_{\g, \h} \coloneqq C^{\infty}(M) \otimes \bigwedge^\bullet \g^* \otimes \bigwedge^\bullet \h ,$$ graded by total ghost number, admits a homotopy Poisson structure with differential $\partial: \mathcal{K}^\bullet_{\g, \h} \rightarrow \mathcal{K}^{\bullet + 1}_{\g, \h}$ and a sequence of $k$-ary brackets $\{\cdot, \dots, \cdot\}_k,$ $k \geq 2,$ such that the Poisson algebra $(H_{\partial}^0(\mathcal{K^\bullet_{\g, \h}}), \{\cdot, \cdot\}_2)$ is identified with the reduced Poisson algebra $(C^{\infty}( C_{red}), \{\cdot, \cdot\}_{\pi_{red}}).$
\end{theorem}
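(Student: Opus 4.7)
The plan is to realize the homotopy Poisson structure via a BFV-type obstruction-theoretic construction centered on a master element $\Omega$ of total ghost number $1$, with differential $\partial = \{\Omega, \cdot\}_2$ and higher brackets encoding the deviation from the classical hamiltonian setting. First I would equip $\cK^\bullet_{\g, \h}$ with the bi-grading in which $\bigwedge^p \g^*$ contributes \emph{ghost degree} $p$ and $\bigwedge^q \h$ contributes \emph{anti-ghost degree} $-q$, so that the total ghost number on $\cK^{p, -q}$ is $p - q$. On this bi-complex there are two natural operators: the Koszul--Tate differential $\delta_J : \cK^{p, -q} \to \cK^{p, -q+1}$ obtained by contraction with $J \in C^\infty(M) \otimes \h^*$, which by the regularity of $J$ yields a resolution $(C^\infty(M) \otimes \bigwedge^\bullet \h, \delta_J) \simeq C^\infty(C)$; and the Chevalley--Eilenberg differential $d_\psi : \cK^{p, -q} \to \cK^{p+1, -q}$ combining $\psi$ on $C^\infty(M)$ with the induced $\g$-module structure on $\bigwedge^\bullet \h$. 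The equivariance of $J$ gives $\delta_J d_\psi + d_\psi \delta_J = 0$.

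Next I would construct the master element $\Omega \in \cK^1$ by an iterative procedure, expanding $\Omega = \Omega_0 + \Omega_1 + \Omega_2 + \cdots$ by anti-ghost degree, with $\Omega_0 \in C^\infty(M) \otimes \h$ encoding $J$ and $\Omega_1 \in \g^* \otimes C^\infty(M) \otimes \h$ encoding $\psi$. The relevant odd bracket on $\cK$ combines $\pi$ on the $M$-factor with the canonical duality pairings, the mixing between the $\g^*$- and $\h$-factors being governed by the $\g$-module structure on $\h$. The classical master equation $\{\Omega, \Omega\}_2 = 0$ is then solved inductively: the obstruction at each stage lies in positive anti-ghost degree and is killed by the acyclicity of the Koszul resolution, provided the bracket of lower-order terms lies in the image of $\delta_J$; the compatibility condition $\pi^\sharp(\mathrm{Ann}(TC)) \subset \cD$ is exactly what guarantees that the leading obstruction vanishes along $C$ and hence lifts. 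The differential $\partial = \{\Omega, \cdot\}_2$ together with the higher multilinear brackets generated by this construction then assemble into the claimed homotopy Poisson structure on $\cK^\bullet_{\g,\h}$.

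To identify $H^0_\partial(\cK^\bullet_{\g, \h})$ I would run the spectral sequence of the bi-grading: its $E_1$-page is the Koszul cohomology along $\h$, which by regularity collapses to $C^\infty(C)$ concentrated in anti-ghost degree zero, and its $E_2$-page is the Chevalley--Eilenberg cohomology $H^\bullet(\g; C^\infty(C))$; freeness and properness of the $G$-action on $C$ then give $H^0(\g; C^\infty(C)) = C^\infty(C)^G = C^\infty(C_{red})$. Since $H^{<0}_\partial = 0$, the higher brackets $\{\cdot, \ldots, \cdot\}_k$ with $k \geq 3$ necessarily vanish on $H^0$ for degree reasons, and the induced binary bracket is computed by $\{[f], [g]\} = [\{\tilde f, \tilde g\}_\pi|_C]$ for $\g$-invariant extensions $\tilde f, \tilde g$, which by the Cattaneo--Zambon reduction theorem coincides with $\{\cdot, \cdot\}_{\pi_{red}}$. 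The main obstacle is the existence of $\Omega$ and the verification of the full $L_\infty$-Jacobi relations: in the classical hamiltonian case the identity $\{J^a, J^b\} = f^{ab}_c J^c$ causes the master equation to truncate at quadratic order in the ghosts, whereas in our setting the weaker compatibility controls brackets only modulo the ideal of $C$, so each step of the induction produces genuinely higher-order corrections whose consistency must be established through a careful use of the acyclicity of the Koszul resolution---this is the technical heart of the argument.
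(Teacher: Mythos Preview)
There is a genuine gap: the ``odd bracket on $\cK$'' that you postulate does not exist in this generality. In the Kostant--Sternberg setting one has $\h=\g$, and the BFV bracket on $C^\infty(M)\otimes\bigwedge\g^*\otimes\bigwedge\g$ is built from $\pi$ together with the canonical pairing of $\g$ with $\g^*$. Here, however, $\g$ and $\h$ are unrelated vector spaces, so there is no duality pairing between the $\g^*$-factor and the $\h$-factor of $\cK^\bullet_{\g,\h}$; the $\g$-module structure on $\h$ is a map $\g\otimes\h\to\h$, not a pairing, and cannot play that role. Relatedly, your master element $\Omega$ cannot live in $\cK^\bullet_{\g,\h}$: encoding $J\in C^\infty(M)\otimes\h^*$ requires $\h^*$, and encoding $\psi\colon\g\to\mathfrak X(M)$ requires vector fields on $M$, neither of which appears in $C^\infty(M)\otimes\bigwedge\g^*\otimes\bigwedge\h$. (Indeed, your $\Omega_0\in C^\infty(M)\otimes\h$ has total ghost number $-1$, not $1$.) Finally, even granting a binary bracket and a master element, a solution of $\{\Omega,\Omega\}_2=0$ in a graded Poisson algebra yields only a \emph{differential} graded Poisson algebra; there is no mechanism by which higher $k$-ary brackets are ``generated by this construction''.

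The paper avoids all of this by working in a strictly larger odd Poisson algebra. The pair $(\psi,J)$ is reinterpreted as a moment map $J^\sharp\colon\ov\g[-1]\to C^\infty(\cM)$ for the hamiltonian action of $\ov\g=\h[1]\oplus\g$ on $\cM=T^*[1]M$, and one forms the degree-one BFV algebra $\cA=C^\infty(\cN)$ for $\cN=\cM\times T^*[1]\ov\g^*[-1]$, which carries a degree $-1$ Schouten-type bracket with genuine pairings $\g\leftrightarrow\g^*$ and $\h\leftrightarrow\h^*$. The BRST charge $Q_{\ov\g}\in\cA$ then honestly contains $\psi(u^i)\in\mathfrak X(M)$ and $J^*(v^j)\in C^\infty(M)$ as coefficients, and compatibility of $\pi$ allows one to extend $Q_{\ov\g}$ to $S^{(\infty)}=Q_{\ov\g}+\Pi+\Pi^{(-1)}+\cdots$ with $\{S^{(\infty)},S^{(\infty)}\}=0$, via the obstruction argument you sketch but run in $\cA$ (using vanishing of BRST cohomology at negative total ghost number). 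The algebra $\cK^\bullet_{\g,\h}$ embeds in $\cA$ as an \emph{abelian} (Lagrangian) subalgebra, and the homotopy Poisson brackets are the Voronov derived brackets $\ell_k(f_1,\dots,f_k)=\pm\{\cdots\{\Pi^{(2-k)},f_1\},\dots,f_k\}$; it is this derived-bracket step, not a master equation internal to $\cK$, that produces the higher operations and, at $k=2$, the identification $(H^0_\partial(\cK^\bullet_{\g,\h}),\{\cdot,\cdot\}_2)\cong(C^\infty(C_{red}),\{\cdot,\cdot\}_{\pi_{red}})$.
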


To prove this statement, we perform some steps. Just as the reduction scheme of Cattaneo-Zambon follows from interpreting the Marsden-Weinstein theorem for the shifted cotangent bundle in classical (non-graded) terms, our first step is the construction of a BFV algebra for the reduction of the shifted cotangent bundle $\cM \coloneqq T^{*}[1]M$ by the hamiltonian action of a graded Lie algebra $\ov\g = \h[1]\oplus \g,$ which is a homological model for the reduction of multivector fields and is independent of any Poisson structure.

Following the BFV procedure, this is obtained from the $\bZ$-graded symplectic manifold $\cN \coloneqq \cM \times T^{*}[1]\ov\g^*[-1]$ and a special function $Q_{\ov\g} \in C^{\infty}(\cN),$ %(on it,) 
called the {\em BRST charge}, whose corresponding  (homological) hamiltonian vector field $\{Q_{\ov\g}, \, \cdot \, \}$ encodes all the gauge symmetries. We shall denote the degree one BFV algebra by $(\cA, \{Q_{\ov\g}, \, \cdot \, \}),$ where $\cA \coloneqq C^{\infty}(\cN).$  We observe that, upon the regularity conditions we assume, the construction of the BFV algebra $(\cA, \{Q_{\ov\g}, \cdot \})$ neither require the use of homological pertubation methods nor a further enlargement of the ghost/antighost spectrum (see Remark \ref{remark: BFV in general}). 

As for the second step, exploring the close relation between the graded reduction data and the reduction data for Poisson structures (\cite{CattaneoZambon}, \cite{Sharapov}), we show that the BFV model $(\cA, \{Q_{\ov\g}, \, \cdot \, \})$ can be naturally coupled with a compatible Poisson structure (\cite{Sharapov}, \cite{AleZabzine}). This produces an extended BFV model in which both the reduction data and the Poisson structure are brought together as parts of a single object, which we shall call {\em extended BRST charge} and denote by  $S^{(\infty)}.$
 
 The final step consists in finding the algebra $\mathcal{K}^\bullet_{\g, \h}$ as an abelian subalgebra of the BFV algebra -- in graded-geometric terms, it can be seen as the algebra of functions of a lagrangian submanifold of $\cN$ -- and we take the differential $\partial: \mathcal{K}^\bullet_{\g, \h} \rightarrow \mathcal{K}^{\bullet + 1}_{\g, \h}$ and the higher $k$-ary brackets $\{\cdot, \dots, \cdot\}_k,$ $k \geq 2,$ on $\mathcal{K}^\bullet_{\g, \h}$ to be derived brackets defined in terms of the extended BRST charge $S^{(\infty)}.$ We conclude that this endows $\mathcal{K}^\bullet_{\g, \h}$
with  a homotopy Poisson structure  by applying Voronov's  construction of homotopy algebras (\cite{Voronov}).  \par 

When the reduction data $(\psi, J)$ are defined by a usual hamiltonian action, that is, $\psi: \g \rightarrow \mathfrak{X}(M)$ a $\g$-action on $(M, \pi)$ admitting an equivariant moment map $J: M \rightarrow \g^*,$ our theorem recovers the homological model obtained by Kostant-Sternberg (\cite{Kostant}). The fact that in this case we obtain a differential Poisson algebra, rather than a more general homotopy Poisson algebra, happens more generally whenever the compatible pair $(\psi, J)$ is determined by the hamiltonian action of a differential graded Lie algebra $( \ov\g, \delta)$ on $\cM.$ Moreover, since our general construction applies to ``weak'' Poisson bivectors (\cite{Sharapov}), we also obtain homological models for the reduced spaces associated to quasi-Poisson spaces (Lie bialgebra and quasi-bialgebra actions, \cite{Lu}, \cite{Alekseev2}) and hamiltonian quasi-Poisson spaces (group-valued moment maps, \cite{Alekseev}). 

This paper illustrates the explicit role of homotopy Poisson structures in Poisson reduction and indicates that reduction of other classical geometric structures naturally described in the terms of graded manifolds, notably Courant algebroids (\cite{Roytenberg, Henrique, GGGR}), may be interpreted from an analogous homotopy standpoint.
 
The paper is organized as follows. In $\S$\ref{section: symplectic degree one}, we briefly recall graded symplectic degree one manifolds (i.e., shifted cotangent bundles) together with their canonical symplectic structure. In $\S$\ref{section: hamiltonian reduction}, we introduce the hamiltonian reduction of the shifted cotangent bundle $\cM \coloneqq T^*[1]M,$ recall how its interpretation in non-graded terms leads to the Cattaneo-Zambon generalized hamiltonian context, 
and discuss the reducibility of weak Poisson structures with respect to such data. In $\S$\ref{section brst in degree zero}, for the purpose of motivation, we briefly recall the  BFV construction in the context of Kostant-Sternberg (\cite{Kostant}) and its graded-geometric interpretation. In $\S$\ref{section: brst in degree one}, we construct the BFV model for the hamiltonian reduction of the shifted cotangent bundle $\cM \coloneqq T^{*}[1]M.$ 
In $\S$\ref{section: the homotopy nature of Poisson reduction}, we show that
this homological model for degree one reduction can be naturally coupled with a given weak Poisson structure; then we explain how this gives rise to a homotopy Poisson structure encoding the corresponding Poisson reduction and, consequently, provides the homotopy generalization of the Kostant-Sternberg result in the Cattaneo-Zambon generalized hamiltonian context. To conclude, we analyze the classical hamiltonian reduction of Poisson structures, quasi-Poisson spaces and hamiltonian quasi-Poisson spaces through the lens of the degree one BFV construction.

\vspace{0.2cm} \noindent 
\textbf{Acknowlegdments:} 
The author thanks H. Bursztyn for his continual support and for all the insightful discussions and helpful suggestions; A. Cabrera for valuable comments on a preliminary version of this note; A. S. Cattaneo for the stay at U. Z\"urich, where part of this work was done; J. Stasheff, S. L. Lyakhovich, A. A. Sharapov for the feedback; and the CNPq and IMPA for the financial support.

\section{Graded symplectic degree one manifolds}\label{section: symplectic degree one}

In this section, we briefly recall the shifted cotangent bundle of a smooth manifold $M$ and its canonical symplectic structure.

\vspace{0.3cm}

Let $M$ be a smooth manifold, and let $E \rightarrow M$ be a vector bundle over $M.$
The degree one manifold $E^*[1]$ is the manifold $M$ endowed with the sheaf of graded algebras $\Gamma(\bigwedge ^\bullet E),$ which we refer to as the sheaf of functions of $E^*[1];$  for $U \subset M$ open, we let  $C^{\infty}(E^{*}[1]) (U) \coloneqq \Gamma(\bigwedge ^\bullet E|_U).$ In particular, for the tangent bundle $TM \rightarrow M,$ we obtain the {\em shifted cotangent bundle} of $M$, which we denote by $T^*[1]M.$ In this case, the corresponding sheaf of functions is given by the multivector fields on $M,$ 
$$
C^\infty(T^*[1]M) \coloneqq \Gamma(\bigwedge^\bullet TM) = \mathfrak{X}^\bullet(M).$$ 

In local coordinates $(U, x_1, \dots, x_n)$ for $M,$ a multivector field $X \in \mathfrak{X}^p(M)$ can be expressed as $$X = \sum_{i_1 < \dots < i_p} a_{i_1 \dots i_p} \partial_{x_{i_1}} \wedge \dots \wedge \partial_{x_{i_p}},  \hspace{0.2cm} a_{i_1 \dots i_p} \in C^{\infty}(M)|_U, $$ so if we introduce $\xi_{i_j} \coloneqq \partial_{x_{i_j}}$ as new anticommuting variables, it can be rewritten as $$X  = \sum_{i_1 < \dots < i_p} a_{i_1 \dots i_p} \xi_{i_1}  \dots \xi_{i_p}, \hspace{0.1cm} a_{i_1 \dots i_p} \in C^{\infty}(M)|_U;$$ hence, locally, the algebra of functions of $T^{*}[1]M$ is simply the graded symmetric algebra generated by the degree one (anticommuting) variables $\{\xi_i, i = 1, \dots, n \}$ and the degree zero (commuting) variables $\{x_i, i = 1, \dots, n\}.$\par

Shifted cotangent bundles are the canonical examples of graded symplectic degree one manifolds; and, as observed by Roytenberg in \cite{Roytenberg}, they are the unique examples of such a structure. %This example is important, because it is the canonical example of a degree one symplectic manifold. 
In fact, the standard Lie bracket of vector fields can be extended, as a derivation of degree $-1,$ to the whole algebra of multivector fields $\mathfrak{X}^\bullet(M)$ and, in this way, we obtain the Schouten bracket of multivector fields (\cite{Dufour}). Hence, $(\mathfrak{X}^\bullet(M), \{\cdot, \cdot\})$ is a Gerstenhaber algebra, i.e., 
%or, as we shall often call it here, 
a degree $-1$ Poisson algebra.
For $X \in \mathfrak{X}^{p}(M)$ and $Y \in \mathfrak{X}^q(M),$ the shifted Poisson bracket $\{X, Y\} \in \mathfrak{X}^{p+q-1}(M)$ is given, in local coordinates, by

$$ \{X, Y\} = \sum \frac{\partial X}{\partial \xi_i}\frac{\partial Y}{\partial x_i} - (-1)^{(p-1)(q-1)} \sum_i \frac{\partial Y}{\partial \xi_i}\frac{\partial X}{\partial x_i},$$
which is an expression analogous to that defining the canonical Poisson bracket on $T^*M.$

In graded terms, a Poisson structure on $M,$ that is, a bivector $\pi \in \mathfrak{X}^2(M)$ such that $\{\pi, \pi\} = 0,$ is simply a special degree two function on $T^*[1]M.$ This point of view to Poisson structures will be important, because, as we shall see, certain submanifolds of the shifted cotangent bundle can be interpreted as providing reduction data for a given Poisson structure on $M$ (\cite{CattaneoZambon}).

\section{Hamiltonian Reduction}\label{section: hamiltonian reduction}
In this section, we discuss the hamiltonian reduction of the shifted cotangent bundle $\cM \coloneqq T^{*}[1]M$ and show how it provides a natural setting for performing reduction of Poisson structures. This is all based on \cite{CattaneoZambon}.
\subsection{Moment maps in degree one} \label{subsetion: moment map in degree one} Let $(\cM, \{\,\cdot, \cdot\, \})$ be the shifted cotangent bundle endowed with its natural degree $-1$ Poisson structure, and let $\ov\g \coloneqq \h[1] \oplus \g$ %(the correct shift is by 1 and not by -1 as it was written before) 
be a graded Lie algebra concentrated in degrees $-1, 0$. An infinitesimal action of $\ov\g$ on $\cM $ is given by a morphism $\Psi: \ov\g \rightarrow \mathfrak{X}(\cM)$ of graded Lie algebras, where $\mathfrak{X}(\cM)$ is the Lie algebra of derivations of $C^{\infty}(\cM).$ 

A moment map for the infinitesimal action $\Psi: \ov\g \rightarrow \mathfrak{X}(\cM)$ is a morphism of (odd) Lie algebras $J^{\sharp}: \ov\g[-1] \rightarrow C^{\infty}(\cM)$ 
satisfying $u_{\cM} = \{J^{\sharp}_1(u),\, \cdot \, \},$ for $u \in \g,$  and $ v_{\cM} = \{ J^{\sharp}_0(v), \, \cdot \, \},$ for $v \in \h.$ From a dual perspective, the map $J^{\sharp}: \ov\g[-1] \rightarrow C^{\infty}(\cM)$ can be seen as the sheaf morphism of a map of degree one manifolds  $(J, J^\sharp): \cM \rightarrow (\ov\g[-1])^*,$
where $(\ov\g[-1])^*$ denotes the degree one manifold associated to the vector bundle $\h^* \times \g \rightarrow \h^*$ and $J: M \rightarrow \h^*$ is the corresponding body map.

This graded hamiltonian setting admits an interpretation in non-graded terms given by the following one-to-one correspondence:
\vspace{0.1cm}
\begin{center}
    \begin{tabular}{|c|c|}
       \hline
       graded $\ov\g$-action $\Psi: \ov\g \rightarrow \mathfrak{X}(\cM)$ & $\g$-action $\psi: \g \rightarrow \mathfrak{X}(M)$  with  \\
        with moment map $ J^\sharp: \ov\g[-1] \rightarrow C^{\infty}(\cM)$  & $J: M \rightarrow \h^* $ equivariant, for $\h$ a $\g$-module \\
        \hline
    \end{tabular}
\end{center}
 \vspace{0.1cm}
 
Later, we shall see how this correspondence provides a generalization of the classical hamiltonian setup for the reduction of Poisson structures ($\S$\ref{subsection: reducible Poisson str}). 

\subsection{Constraint submanifolds} 

To perform reduction, we look at level sets of the moment map $(J, J^{\sharp})$. The level set $\cC \coloneqq (J, J^\sharp)^{-1}(0)$ is defined by the sheaf of vanishing ideals $\cJ$ generated by the image of the map $J^\sharp: \ov\g[-1] \rightarrow C^{\infty}(\cM).$ As in the case of smooth manifolds, if $0 \in \h^*$ is a regular value  of  $(J, J^\sharp): \cM \rightarrow (\ov\g[-1])^*,$ this defines a smooth submanifold of $\cM$  (\cite[Cor.~2.24]{GGGR}).

The zero set $Z(\cJ)$ of the ideal $\cJ$ in $M$ is the vanishing set of the ideal spanned by the functions $\{J^*(v) : v \in \h \subset C^{\infty}(\h^*) \},$  which is the zero level set  $J^{-1}(0)$ 
of the $\g$-equivariant map $J: M \rightarrow \h^*.$
On the other hand, the fact that $J^\sharp: \ov\g[-1] \rightarrow C^{\infty}(\cM)$  is bracket preserving implies $\cC$ is coisotropic, which means that $\{\cJ, \cJ\} \subset \cJ.$ This condition applied to the generators of the ideal $\cJ$ shows that it codifies, geometrically, the level set $C \coloneqq J^{-1}(0)$ %which is the zero set of the ideal spanned by $\{J^\sharp_{0} (v)\}_{v \in \g_{-1}},$ 
endowed with the tangent involutive distribution $\cD$ spanned by $\{J_1^\sharp(u)\}_{u \in \g}.$

Following \cite{GGGR}, we know that $0 \in \h^*$ being a regular value of $(J, J^{\sharp}): \cM \rightarrow (\ov\g[-1])^*$ is equivalent to it being a regular value of $J: M \rightarrow \h^*$ and the action  $\psi \coloneqq J^\sharp|_{\g} : \g \rightarrow \mathfrak{X}(M)$  being free along $C \coloneqq J^{-1}(0).$
We will say that the pair $(\psi, J)$ is \textit{regular} if the free action  $\psi: \g \rightarrow \mathfrak{X}(\cM)$ on $C$ integrates to a free and proper action of a Lie group $G$; in this case, we have a smooth reduced space $C_{red} \coloneqq C/G.$ 
\par 
The algebra of functions on the graded submanifold $\cC \coloneqq (J, J^{\sharp})^{-1}(0)$ is obtained by moding out $C^{\infty}(\cM)$ by $\cJ$, i.e., \begin{equation}\label{functions on the constraint graded submanifold}
C^{\infty}(\cC) \cong \frac{C^{\infty}(\cM)}{\cJ};
\end{equation}   
so, $ \cC = Ann(\mathcal{D})[1],$ for the vector bundle $Ann(\mathcal{D}) \rightarrow C,$ where  $Ann(\mathcal{D})$ denotes the annihilator of the distribution $\mathcal{D}.$ \par 
%\begin{remark}
%it was in a remark

%\end{remark}
\begin{remark}\label{generic gauge system - coisotropic submanifolds}
As it was shown in \cite{CattaneoZambon}, the geometric data associated to the vanishing ideal of a general coisotropic submanifold of dimension $(r, s)$ of $\cM \coloneqq T^{*}[1]M$  consists of an embedded submanifold of $M$ of codimension $r$ endowed with an integrable tangent distribution of rank $s.$ So the generic gauge systems considered by Lyakhovich-Sharapov in \cite{Sharapov} provide examples of smooth coisotropic submanifolds of $\cM;$ actually, what they consider can be identified as the degree one version of the so called irreducible first class constraints.
\end{remark}
%%%%%%%%%%%%%
\subsection{Reduction} In constructing the reduced space associated to the constraint submanifold $\cC \coloneqq (J, J^{\sharp})^{-1}(0)$ (or to any coisotropic submanifold of $\cM$), it is important to bear in mind how the functions there are to be obtained: first, we restrict the functions on $\cM$ to the constraint submanifold $\cC,$ which can be described by the quotient in (\ref{functions on the constraint graded submanifold}), and then we take gauge invariance, i.e., invariance with respect to the infinitesimal action of $\ov\g.$ In other words, if the reduced space exists and is denoted by $\cC_{red},$ then its algebra of functions  is  given by 
\begin{equation}\label{gauge invariant function on the constraint submanifold}
C^{\infty}(\cC_{red}) \cong \bigg(\frac{C^{\infty}(\cM)}{\cJ}\bigg)^{\Psi}, 
\end{equation}
where the right-hand side is to be read as gauge invariant functions on the constraint submanifold $\cC.$
Notice that if $$N(\cJ) = \{ f \in C^{\infty}(\cM) \, \, | \, \, \{f, \cJ\} \subset \cJ \} $$ is the normalizer of $\cJ$ in $C^{\infty}(\cM),$ then we have the natural isomorphism 
\begin{equation}\label{gauge invariant function/normalizer}
 \bigg(\frac{C^{\infty}(\cM)}{\cJ}\bigg)^{\Psi} \cong \frac{N(\cJ)}{\cJ}. 
\end{equation}
 
In degree zero, the gauge invariance condition gives functions that are constant along the foliation %($\cF$) 
integrating the distribution $\cD \coloneqq \langle J_1^\sharp(u) \rangle_{u \in \g}$, whereas, in degree one, it gives vector fields in the normalizer of this distribution (i.e., projectable vector fields).  %(the distribuition $\cD.$) 
If the pair $(\psi, J)$ associated to the moment map $J^{\sharp}: \ov\g[-1] \rightarrow C^{\infty}(\cM)$ is regular, then the leaf space of $C$ is given by $ C_{red} \coloneqq C/G.$  Moreover, as a special case of \cite[Prop.~5.11]{CattaneoZambon}, one obtains that the algebra of gauge invariant functions on the constraint submanifold $\cC$ is generated in degrees zero and one, so it follows that the reduced space $\cC_{red}$ is naturally isomorphic to $T^{*}[1]C_{red}$ -- all this is also valid for a general coisotropic submanifold, if one can guarantee the smoothness of the corresponding quotient. This gives the following version of the Marsden-Weinstein reduction theorem (see \cite[Props.~4.2 and ~10.1]{CattaneoZambon}).

\begin{theorem}\label{marsden-weinstein cattaneozambon}
Let $J^\sharp: \ov\g[-1] \rightarrow C^{\infty}(\cM)$ be a moment map for an infinitesimal action of $\ov\g \coloneqq \h[1]\oplus \g $ on $\cM.$ Assume that $0 \in \h^*$ is a regular value of $J: M \rightarrow \h^*$ and that the action $\psi \coloneqq J^{\sharp}|_{\g}: \g \rightarrow \mathfrak{X}(M)$ on $C \coloneqq J^{-1}(0)$ integrates to a free and proper action of a Lie group $G$, that is, assume that the pair $(\psi, J)$ is regular. Then the corresponding degree one reduced space $\cC_{red}$ exists and  is naturally isomorphic to $T^*[1]\big(C/G\big).$ 
\end{theorem}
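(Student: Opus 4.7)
The plan is to compute the algebra of functions on the putative reduced space directly via the formula (\ref{gauge invariant function/normalizer}), namely $C^\infty(\cC_{red}) \cong N(\cJ)/\cJ$, and match it with $C^\infty(T^*[1](C/G)) = \Gamma(\bigwedge^\bullet T(C/G))$. The crucial structural input that makes this tractable is \cite[Prop.~5.11]{CattaneoZambon}, asserting that the algebra of gauge invariant functions is generated in degrees zero and one; granting this, the entire identification reduces to matching these two generating pieces.

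Under regularity, the constraint submanifold is the smooth graded submanifold $\cC = \mathrm{Ann}(\cD)[1]$ with body $C = J^{-1}(0)$ and involutive distribution $\cD \subset TC$ spanned by $\{\psi(u)\}_{u \in \g}$. In degree zero, $\cJ$ is generated by the pullbacks $J^*(v)$, $v \in \h$, so that $(C^\infty(\cM)/\cJ)_0 = C^\infty(C)$ and the gauge invariance condition becomes annihilation by all $\psi(u)$; since the $G$-action on $C$ is free and proper, pullback along the submersion $\pi: C \to C/G$ gives a canonical isomorphism $C^\infty(C/G) \cong (C^\infty(C))^G$. In degree one, $(C^\infty(\cM)/\cJ)_1 \cong \Gamma(TM|_C/\cD)$ via $\mathrm{Ann}(\cD)^* \cong TM|_C/\cD$, and the normalizer condition translates into $G$-invariance together with projectability along $\pi$; using the short exact sequence $0 \to \cD \to TC \to \pi^* T(C/G) \to 0$ together with a choice of complement of $TC$ inside $TM|_C$, I would verify that the natural map $\Gamma(T(C/G)) \to \big(\Gamma(TM|_C/\cD)\big)^G$ is an isomorphism.

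Matching these generating pieces and invoking the generation result, the isomorphism of graded commutative algebras $N(\cJ)/\cJ \cong \Gamma(\bigwedge^\bullet T(C/G))$ follows formally; since the whole construction is local over $C/G$, it upgrades to an isomorphism of graded manifolds $\cC_{red} \cong T^*[1](C/G)$, which simultaneously establishes existence. The main obstacle is the degree-one identification: one must carefully disentangle the geometry over $C$ from the ambient $TM|_C$ and verify that gauge invariance in $N(\cJ)$ truly corresponds to descent along $\pi$. Once this is done, the degree-zero step is essentially classical Marsden--Weinstein, and everything in higher degrees is automatic by the generation result from \cite{CattaneoZambon}.
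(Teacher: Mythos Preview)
Your proposal follows essentially the same route as the paper's sketch preceding the theorem: identify $(N(\cJ)/\cJ)_0$ with $C^\infty(C/G)$, identify $(N(\cJ)/\cJ)_1$ with $\mathfrak{X}(C/G)$ via projectable vector fields, and then invoke \cite[Prop.~5.11]{CattaneoZambon} to conclude that generation in degrees $0$ and $1$ forces the full isomorphism with $\Gamma(\bigwedge^\bullet T(C/G))$. The paper itself does not give more detail than this, referring to \cite[Props.~4.2 and 10.1]{CattaneoZambon} for the proof.

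One point in your degree-one step deserves sharpening. You write the target of the comparison map as $\big(\Gamma(TM|_C/\cD)\big)^G$, but $G$-invariance alone is only the $\g$-part of $\ov\g$-gauge invariance. The $\h[1]$-part acts on a class $[X] \in \Gamma(TM|_C/\cD)$ by $v \mapsto \{J_0^\sharp(v), X\} = -X(J^*v)|_C$, and it is precisely this condition that forces $X$ to be tangent to $C$. Without it, the map $\Gamma(T(C/G)) \to \big(\Gamma(TM|_C/\cD)\big)^G$ need not be onto: a $G$-invariant section of the normal bundle of $C$ would obstruct surjectivity. Once you replace $G$-invariance by full $\ov\g$-invariance, the ``disentangling'' you flag as the main obstacle becomes transparent: $\h$-invariance cuts $TM|_C/\cD$ down to $TC/\cD \cong \pi^*T(C/G)$, and $\g$-invariance then gives descent along $\pi$.
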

%%%%%%%%%%%

\subsection{Reducible Poisson structures}\label{subsection: reducible Poisson str}
Let $\cJ \subset C^{\infty}(\cM)$ be the ideal defined by the moment map $J^{\sharp}: \ov\g[-1] \rightarrow C^{\infty}(\cM).$ Due to (\ref{gauge invariant function/normalizer}), we say that Poisson structure $\pi \in C_2^{\infty}(\cM) = \mathfrak{X}^2(M)$ is reducible if $\pi \in N(\cJ);$ in this case, it gives rise to a function $\pi_{red} \in  C_2^{\infty}(\cC_{red}),$ which is a Poisson structure on $C_{red}.$ Notice, however, that for a bivector $\pi \in   C_2^{\infty}(\cM)$ to induce a Poisson structure on $C_{red},$ it suffices that $\{\pi, \pi \} \in \cJ;$ following \cite{Sharapov}, we call bivectors satisfying this last condition \textit{weak Poisson structures}. \par

Observe that, if we set 
$$C^{\infty}(M)|_{\g\text{-inv}} \coloneqq \{f \in C^{\infty}(M) \,  : \, f|_{C \coloneqq J^{-1}(0)} \hspace{0.2cm} \text{is $\g$-invariant} \},
$$ 
then, for a Poisson structure $\pi \in \mathfrak{X}^2(M)$, the condition $\pi \in N(\cJ)$ is equivalent to $C^{\infty}(M)|_{\g\text{-inv}}$ being a Poisson subalgebra of $C^{\infty}(M)$ and $\pi^\sharp(Ann(TC)) \subset \cD \coloneqq \langle J_1^\sharp(u) \rangle_{u \in \g_0}.$ Based on this and on the hamiltonian correspondence of $\S$\ref{subsetion: moment map in degree one}, we introduce the following definition.
 
\begin{definition} \label{compatibility of the Poisson struct with the reduction data (psi, J)}
Let $\psi: \g \rightarrow \mathfrak{X}(M)$ be a Lie algebra action on a Poisson manifold $(M, \pi),$ and let $J: M \rightarrow \h^*$ be a $\g$-equivariant map, for $\h$ a $\g$-module. The pair $(\psi, J)$ is said to be  compatible  with the Poisson structure $\pi$  if 
\begin{enumerate}
    \item $C^{\infty}(M)|_{\g\text{-inv}} \subset C^{\infty}(M)$ is a Poisson subalgebra;
    \item $\pi^\sharp(Ann(TC)) \subset \cD \coloneqq \langle \psi(u) \rangle_{u \in \g}.$
\end{enumerate}
 \end{definition}

 %The usual hamiltonian data of a 
 A usual hamiltonian $\g$-action $\psi: \g \rightarrow \mathfrak{X}(M)$ on a Poisson manifold $(M, \pi)$ with a moment map $J: M \rightarrow \g^*$ fits in the above definition: from $\pi^\sharp(d(J^*u)) = \psi(u), u \in \g, $ it follows that $\pi^\sharp(Ann(TC)) = \langle \psi(u) \rangle_{u \in \g} \subset TC;$ and this, together with the invariance of $\pi$ (that is, $\mathscr{L}_{\psi(u)} \pi = 0, u \in \g$), gives condition (1).  Therefore, Definition \ref{compatibility of the Poisson struct with the reduction data (psi, J)} clarifies the sense in which the graded moment map $J^\sharp: \ov\g[-1] \rightarrow C^{\infty}(\cM)$ generalizes the classical hamiltonian setup for the reduction of a Poisson manifold $(M, \pi)$. Observe that, in general, the Poisson structure $\pi$ is no longer necessarily invariant.
 
 From this perspective, the degree one Marsden-Weinstein theorem of Cattaneo-Zambon (Theorem \ref{marsden-weinstein cattaneozambon}) implies the following generalized version of the classical Marsden-Weinstein reduction theorem (\cite[Props.~4.2 and~10.1]{CattaneoZambon}).

\begin{theorem}\label{generalized context - Marsden Weinstein thm}
Let $\psi: \g \rightarrow \mathfrak{X}(M)$ be a Lie algebra action on a Poisson manifold $(M, \pi),$ and let $J: M \rightarrow \h^*$ be a $\g$-equivariant map, for $\h$ a $\g$-module. Assume that the pair $(\psi, J)$ is regular and compatible with the Poisson structure $\pi.$ Then the quotient $C_{red} \coloneqq C/G$ inherits a Poisson structure $\pi_{red}.$
\end{theorem}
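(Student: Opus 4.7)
The plan is to deduce this non-graded statement from the graded Marsden-Weinstein theorem (Theorem \ref{marsden-weinstein cattaneozambon}) by working on the shifted cotangent bundle $\cM \coloneqq T^{*}[1]M$ and viewing the bivector $\pi$ as a degree two function there. First I would use the hamiltonian correspondence of $\S$\ref{subsetion: moment map in degree one} to promote the non-graded reduction data $(\psi, J)$ to a graded moment map $J^{\sharp}: \ov\g[-1]\to C^{\infty}(\cM)$ for the action of $\ov\g = \h[1]\oplus \g$ on $\cM.$ Let $\cJ \subset C^{\infty}(\cM)$ be the ideal generated by the image of $J^{\sharp}.$

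The next step is to reinterpret the compatibility hypothesis as the single graded statement $\pi \in N(\cJ).$ Here I would verify the equivalence sketched immediately before Definition \ref{compatibility of the Poisson struct with the reduction data (psi, J)}: condition (1) says that $\{\pi, J^{\sharp}(u)\}$ restricts to a function that pairs trivially with the degree zero generators of $\cJ$ modulo $\cJ$ itself, while condition (2) is precisely the statement that $\{\pi, J^{\sharp}(v)\} \in \cJ$ for $v \in \h,$ since $\pi^{\sharp}(\mathrm{Ann}(TC)) = \pi^{\sharp}(dJ^{*}\h)$ along $C$ and the distribution $\cD$ is spanned by $J_1^{\sharp}(\g).$ Taken together, these two conditions say exactly that $\{\pi, \cJ\}\subset \cJ,$ i.e., $\pi \in N(\cJ).$

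With regularity in force, Theorem \ref{marsden-weinstein cattaneozambon} gives a smooth graded reduced space $\cC_{red}\cong T^{*}[1]C_{red}.$ By (\ref{gauge invariant function/normalizer}), the algebra of functions on $\cC_{red}$ is $N(\cJ)/\cJ,$ so $\pi \in N(\cJ)$ descends to a well-defined element $\pi_{red} \in C^{\infty}(\cC_{red})$ of degree two, i.e., a bivector on $C_{red}.$ Because $\cC_{red}$ is itself a shifted cotangent bundle, the Schouten bracket on $T^{*}[1]C_{red}$ is induced from that on $\cM$ via the natural identification $C^{\infty}(\cC_{red})\cong N(\cJ)/\cJ$ (this is precisely the coisotropic/Poisson-reduction statement at the level of functions). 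Hence from $\{\pi, \pi\} = 0$ on $\cM$ we obtain $\{\pi_{red}, \pi_{red}\} = 0$ on $\cC_{red},$ so $\pi_{red}$ is a Poisson bivector on $C_{red}.$

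The main obstacle is the second step: carefully checking that the two geometric conditions in Definition \ref{compatibility of the Poisson struct with the reduction data (psi, J)} are jointly equivalent to $\pi \in N(\cJ).$ Once this equivalence is in hand, everything else is an application of the graded reduction theorem together with the general fact that the Schouten bracket on $T^{*}[1]M$ descends through Poisson reduction along a coisotropic submanifold, which is in turn what underlies the identification $\cC_{red}\cong T^{*}[1]C_{red}.$
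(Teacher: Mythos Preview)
Your plan is correct and matches the paper's own approach: the paper does not give an independent proof of this theorem but explicitly derives it from Theorem~\ref{marsden-weinstein cattaneozambon} via the observation in \S\ref{subsection: reducible Poisson str} that compatibility of $(\psi,J)$ with $\pi$ is equivalent to $\pi\in N(\cJ)$, so that $\pi$ descends to a degree two function $\pi_{red}$ on $\cC_{red}\cong T^*[1]C_{red}$, and $\{\pi,\pi\}=0$ forces $\{\pi_{red},\pi_{red}\}=0$. One small caution: your informal description of how condition~(1) enters is a bit off---$\{\pi,J_1^\sharp(u)\}=-\mathscr{L}_{\psi(u)}\pi$ is a bivector, not a function, and the verification that it lies in $\cJ$ (together with condition~(2)) is what encodes the Poisson-subalgebra statement; but you already flag this step as the main obstacle to be checked carefully, so the plan is sound.
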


We will now proceed to provide the corresponding homological version of the above theorem.
%%%%%%%%
\section{Review of BFV in degree zero}\label{section brst in degree zero}

In this section, for the purpose of motivation, we briefly recall the BFV construction in the context of   hamiltonian $\g$-actions on Poisson manifolds. We refer the reader to \cite{Kostant} for further details regarding this case (in \cite{Stasheff}, a more general setup is considered).
\vspace{0.3cm}

The BFV algebra provides a homological version of the reduction of hamiltonian spaces: it consists of a complex whose differential encodes the hamiltonian symmetries and whose cohomology models the reduced Poisson algebra.
 
Given a Poisson manifold $(M, \pi)$ and $J: M \rightarrow \g^*$ a moment map for a hamiltonian $\g$-action on $M,$ the BFV algebra is defined by the following data: 

\begin{itemize}
\item  %(the degree zero Poisson) 
the algebra $\mathcal{K}^\bullet \coloneqq \bigoplus_{n \in \mathbb{Z}} \mathcal{K}^n,$ where
%, for $n \in \mathbb{Z},$ we put 
$$\mathcal{K}^n \coloneqq \bigoplus_{n = p - q} K^{p, q},$$
with 
\begin{equation}\label{terms of the bigraded vsp K}
 K^{p, q} \coloneqq C^{\infty}(M) \otimes \bigwedge^p \g^* \otimes \bigwedge ^q \g,
\end{equation}
endowed with the degree zero (super) Poisson bracket obtained from the Poisson bracket on $C^{\infty}(M),$ the standard pairing between $\g$ and $\g^*$ and the condition $\{f, u^*\} \equiv 0,$ for $f \in C^{\infty}(M)$ and $u^* \in \g^*;$ 

\item the degree one element $Q_{\g} \in \mathcal{K}^{1}$ given by \begin{equation}\label{degree zero brst charge}
Q_{\g} = J_i\otimes u^{*}_i - \frac{1}{2} c_k^{ij}\, u^{*}_{i}u^{*}_{j}u^k,
\end{equation}
where  $c_k^{ij}$ are structural constants associated to the basis $\{u^i\},$ and $J_i$ denotes the $i$th-component of the moment map $J: M \rightarrow \g^\ast$, 
%in the direction of $u^i \in \g, $ 
that is, $J_i = u^i \circ J,$
which satisfies  \begin{equation}\label{master eq deg zero brst charge}
\{Q_{\g}, Q_{\g}\} = 0.
\end{equation}
\end{itemize}

Then the BFV algebra is the differential graded Poisson algebra $(\mathcal{K}^{\bullet}, \{Q_{\g}, \cdot \, \}).$ Usually, the gradings of $K^{\bb}$ associated to $\g$ and $\g^*$ are called {\em antighost} and {\em ghost} numbers, respectively, whereas the grading of $\cK^{\bullet}$ is called {\em total ghost number}. %On ther other hand, 
The distinguished element in (\ref{degree zero brst charge}) and the equation (\ref{master eq deg zero brst charge}) that it satisfies are often referred to as the {\em BRST charge} and the {\em master equation}, respectively, whereas the bracket $\{\cdot, \cdot\}$ is called the {\em BFV bracket.}

%\begin{remark}
Reduction involves two steps: restricting to a level set of the moment map and taking gauge invariance. The BFV algebra comes out of realizing this two-step process homologically. 
%More precisely, 
For the restriction part, we consider the so called  Koszul complex
\begin{equation}\label{koszulcomplex deg 0}
\begin{tikzcd} 
\cdots \arrow{r}{\delta_K} & \bigwedge^2 \g \otimes C^{\infty}(M) \arrow{r}{\delta_K} & \g \otimes C^{\infty}(M)  \arrow{r}{\delta_K} & C^{\infty}(M) \arrow{r} & 0,
\end{tikzcd}
\end{equation}
where $\delta_K : \g \otimes C^{\infty}(M) \rightarrow C^{\infty}(M)$ is defined as $\delta_K(u \otimes 1) = J^*u$ and $J(1 \otimes f) = 0,$ for $u \in \g$ and $f \in C^{\infty}(M),$ and extended to $\bigwedge^\bullet \g \otimes C^\infty(M)$ as an odd derivation. %It happens that 
Whenever $0 \in \g^*$ is a regular value of the moment map $J: M \rightarrow \g^*,$ (\ref{koszulcomplex deg 0}) provides a resolution of the quotient $C^{\infty}(M)/I \cong C^{\infty}(J^{-1}(0)),$ where $I$ denotes the vanishing ideal of $J ^{-1}(0).$ On the other hand, to take into account gauge invariance, we consider the Chevalley-Eilenberg complex 
 
\begin{equation}\label{CE eilenberg deg 0}
\begin{tikzcd}
0 \arrow{r} & \g \otimes C^{\infty}(M) \arrow{r}{\delta_{CE}} &  \g^* \otimes \g \otimes C^{\infty}(M)  \arrow{r}{ \delta_{CE}} & \cdots,
\end{tikzcd}
\end{equation}
since $\bigwedge^\bullet \g \otimes C^{\infty}(M)$  is $\g$-module. The BFV algebra is then constructed by bringing together the complexes (\ref{koszulcomplex deg 0}) and (\ref{CE eilenberg deg 0}). Notice that the differentials  $\delta_K$ and $\delta_{CE}$ are recovered from the inner derivation $\{Q_{\g}, \, \cdot \, \}.$
%\end{remark}

\begin{remark}\label{remark: BFV in general}
    The BFV construction applies, more generally, to a first class ideal, that is, an ideal $I \subset C^\infty(M)$ satisfying $\{I, I\} \subset I,$ but with this condition not necessarily given in terms of the structural constants of a Lie algebra $\g.$  If the ideal $I$ is regular, that is, if its vanishing set on $M$ is a embedded submanifold, the Koszul complex still provides a resolution of the quotient $C^\infty(M)/I;$ however, the construction of the corresponding BRST charge requires the application of homological perturbation methods (\cite{Stasheff, Henneaux's Book}). For non-regular ideals, the Koszul complex is not acyclic in general, so one needs first to replace it by the Koszul-Tate resolution, which leads to an enlargement of the ghost/antighost spectrum by the addition of ghosts of ghosts and so on, and then apply the homological perturbation theory to obtain the BRST charge (\cite{Stasheffetal, Stasheff, Kimura, Henneaux's Book}).
\end{remark}

Since the BRST differential is an inner derivation, the cohomology $ H^\bullet_{\{Q_{\g}, \, \cdot \, \}}(\cK)$ naturally inherits a super Poisson structure from $\mathcal{K}^\bullet.$  In particular, the cohomology in degree zero $H^0_{\{Q_{\g}, \, \cdot \, \}}(\cK)$  carries the structure of a Poisson algebra. Then, from the acyclicity of the Koszul complex, one concludes that the natural map 
\begin{align}
H^0_{\{Q_{\g}, \, \cdot \, \}}(\cK) & \longrightarrow \frac{N(I)}{I} \label{Map 0th cohomology and N(I)/I} \\
[(x^{0,0}, x^{1,1}, \cdots)] & \longmapsto \overline{x^{0, 0}}. \nonumber
\end{align}
is a Poisson algebra isomorphism. Under the hypothesis of the classical Marsden-Weinstein theorem, the reduced space $C_{red} \coloneqq J^{-1}(0)/G$ is a Poisson manifold and, in this case, we have  $N(I)/I \cong C^{\infty}(C_{red})$ as Poisson algebras. This the sense in which  we say that the BFV  procedure provides a homological description of hamiltonian reduction.

In graded-geometric terms, the BFV  construction is interpreted as follows: the complex $\cK^\bullet$  can be seen as the algebra of functions on the $\mathbb{Z}$-graded manifold $ M \times T^{*}\g^*[-1],$ where the BRST charge is a function of degree one and the BRST differential $\{Q_\g, \cdot \}$ is a homological vector field. From this perspective, in the ``extended phase space'' $\cM,$ the single  canonical symmetry $\{Q_{\g}, \, \cdot \, \}$ encodes all the infinitesimal symmetries given by the $\g$-action on the ``standard phase space'' $M.$

%%%%%%%%%%%%%%%%%%%%%%%%%%%

\section{BFV in degree one}\label{section: brst in degree one}

In this section, we construct the BFV model for the hamiltonian reduction of the shifted cotangent bundle $\cM \coloneqq T^*[1]M.$   
\vspace{0.3cm}

Given a moment map $J^\sharp: \ov\g[-1] \rightarrow C^{\infty}(\cM)$ describing the hamiltonian action of a graded Lie algebra $\ov\g = \h[1]\oplus\g,$  we will provide a homological description of the algebra of functions of the corresponding reduced space. For this, we follow the geometric perspective presented in the end of the last section: we look for a graded manifold endowed with a (hamiltonian) homological vector field, out of which we shall be able to construct a complex that encodes the hamiltonian reduction. 

%%%%%%%%%%%%%%%%%%%%%%%%%%%%%%%%%%%%%%%%%%%%%%%%%%%%%%%%%%%%%%%%%%
\subsection{The underlying algebra of the degree one BFV model} 
In analogy with the degree zero case, consider the product 
$$
\cN:= \cM \times T^*[1]\ov\g^*[-1],
$$
in such a way that
\begin{equation}
C^\infty(\cN) = C^{\infty}(\cM)  \otimes C^\infty (T^*[1]\ov\g^*[-1])  =  C^{\infty}(\cM) \otimes S^{\b}(\ov\g^*[-1]) \otimes  S^{\b}(\ov\g).
\end{equation} 

Note that $\cN$ has a natural degree $-1$ Poisson structure. Indeed, the canonical paring between $\ov\g$ and $\ov\g^*,$ which is of degree $-1$ in $T^*[1]\,\ov\g^*[-1],$ and the Schouten bracket in $\cM,$ also of degree $-1,$ can be extended through the Leibniz rule to a Poisson structure on $C^{\infty}(\cN).$\par 

Let 
\begin{equation}
A^{p, q} \coloneqq  C^{\infty}(\cM) \otimes  S^{p}(\ov\g^*[-1]) \otimes S^{q}(\ov\g) \subseteq C^\infty(\cN).
\end{equation}
As in the case of degree zero, $a^{p, q} \in A^{p, q}$ will be said to have {\em ghost number} $p,$ {\em antighost number} $q,$ and {\em total ghost number} $p-q.$ However, notice that, in this case, the grading of $C^{\infty}(\cN)$ induced by total ghost number does not coincide with the $\bZ$-grading that it already carries. So the $\bZ$-graded algebra $C^{\infty}(\cN)$ becomes bigraded, and we let $\cA^{k, \ell}$ denote the subspace of $C^{\infty}(\cN)$ consisting of functions that have total ghost number $k$ and function degree $\ell.$ Then we take the underlying algebra of the BFV model to be $\cA \coloneqq C^{\infty}(\cN)$ with the additional grading by total ghost number.

 \subsection{The BRST charge} The BRST charge should be a function in $\cA \coloneqq C^\infty(\cN)$ whose corresponding inner derivation recovers  the constraints given by the moment map $J^{\sharp}: \ov\g[-1] \rightarrow C^{\infty}(\cM)$ and encodes gauge invariance.
   
We start by considering %(given a moment map $ J^\sharp: \ov\g[-1] \rightarrow C^{\infty}(\cM)$, we let) 
$$Q_0 = J^{\sharp}_1(u^i) 
u^*_i + J^{\sharp}_0(v^j)  v_j^*,$$
where $\{u^i\}$ and $\{v^j\}$ are bases for $\g$ and $\h[1],$ respectively, whereas $\{u^*_i\}$ and $\{v^*_j\}$ denote the corresponding dual bases. 
Since $\{u_i^*, u^j\} = \delta^j_{i},$ for $u^i \in \g$ and $u^*_j \in \g^*$,  and similarly for the pairing between $\h$ and $\h^*,$ it follows that $\{Q_0, u^i\} =  J^{\sharp}_1(u^i)$ and $\{Q_0, v^j\} = J^{\sharp}_0(v^j).$\par 
Next, we notice that the Poisson structure on $\cN$ satisfies  
\begin{equation}\label{relation of the bigrading under de dg -1 Poisson structure}
\{ A^{i, j}, A^{k, \ell} \} \subseteq A^{i + k, j + \ell} \oplus A^{i + k - 1 , j + \ell - 1}.
\end{equation}
Thus, as $Q_0 \in A^{1, 0},$ we see that
\begin{equation}
\{\, Q_0, \, A^{p, q}\, \} \subseteq A^{p, q - 1} \oplus A^{p+1, q}.
\end{equation}
Hence, considering the action of $\{Q_0, \, \cdot\, \}$ restricted to $A^{0, q},$ $q \in \bN,$ and taking the projection onto $A^{0, q-1},$ we obtain the complex

\begin{equation}\label{Koszul complex in deg 1}
\begin{tikzcd} 
\cdots \arrow{r}{\delta} &  C^{\infty}(\cM) \otimes S^2 (\ov\g) \arrow{r}{\delta} &  C^{\infty}(\cM) \otimes \ov\g  \arrow{r}{\delta} & C^{\infty}(\cM) \arrow{r} & 0,
\end{tikzcd}
\end{equation}
which is a graded version of the Koszul complex (\ref{koszulcomplex deg 0}). As in the degree zero case, the Koszul complex (\ref{Koszul complex in deg 1}) is, under appropriate conditions, a resolution of the quotient $C^{\infty}(\cM)/\cJ,$ where $ \cJ$ is the sheaf of ideals spanned by the constraints given by the moment map. More precisely, we have the following key result.

\begin{proposition}\label{acyclicity of koszul complex in degree one}
If $0 \in \h^*$ is a regular value of the moment map $(J, J^{\sharp}): \cM \rightarrow (\ov\g[-1])^*,$ then the Koszul complex (\ref{Koszul complex in deg 1}) is acyclic.
\end{proposition}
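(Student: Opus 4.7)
The plan is to exploit the decomposition $\ov\g = \h[1] \oplus \g$; in graded-commutative terms this gives $S^\bullet(\ov\g) \cong \bigwedge^\bullet \h \otimes S^\bullet(\g)$ (odd generators from $\h[1]$ yielding the exterior algebra, even generators from $\g$ the polynomial algebra). The Koszul differential correspondingly splits as $\delta = \delta_\h + \delta_\g$, where $\delta_\h$ is the odd derivation extending $v \in \h \mapsto J^\sharp_0(v) = J^*v \in C^\infty(M)$ and $\delta_\g$ is the odd derivation extending $u \in \g \mapsto J^\sharp_1(u) = \psi(u) \in \mathfrak{X}^1(M)$, both trivial on $C^\infty(\cM)$. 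Since these derivations act on disjoint sets of generators and both vanish on $C^\infty(\cM)$, a direct check on generators gives $\delta_\h^2 = \delta_\g^2 = \{\delta_\h, \delta_\g\} = 0$, so the complex (\ref{Koszul complex in deg 1}) is the total complex of a bicomplex.

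I analyze the spectral sequence of this bicomplex with respect to the filtration by $S^\bullet(\g)$-degree. The $E_0$-page carries only $\delta_\h$. Restricted to $C^\infty(M) \otimes \bigwedge^\bullet \h$, this is the classical Koszul complex on $\{J^*(v^j)\}_j$; by the regularity hypothesis $0 \in \h^*$ is a regular value of $J$, so near $C = J^{-1}(0)$ these functions form a local regular sequence in $C^\infty(M)$, and the classical Koszul theorem shows the sub-complex resolves $C^\infty(C) = C^\infty(M)/I_C$. Since $\bigwedge^\bullet TM \otimes S^\bullet(\g)$ is locally free over $C^\infty(M)$, tensoring preserves acyclicity, and the $E_1$-page collapses onto $\mathfrak{X}^\bullet(M)|_C \otimes S^\bullet(\g)$ in $\bigwedge^\bullet\h$-degree zero.

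The induced differential on $E_1$ is $u^i \mapsto \psi(u^i)|_C$, a super-Koszul differential on odd generators of $\mathfrak{X}^\bullet(M)|_C$. To prove its acyclicity, I localize near a point of $C$ and combine freeness of the infinitesimal action with Frobenius integrability of the involutive distribution $\cD|_C = \langle\psi(u^i)|_C\rangle$ (integrable because $\psi$ is a Lie algebra morphism) to choose local coordinates $(x_1, \ldots, x_r, y_1, \ldots, y_s, z_1, \ldots, z_t)$ with $J^*(v^j) = x_j$ and $\cD|_C = \mathrm{span}\{\partial_{y_i}|_C\}$; after a $C^\infty(C)$-linear change of basis in $\g$, we may arrange $\psi(u^i)|_C = \partial_{y_i}|_C$. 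The local $\delta_\g$-complex then factorizes, as a complex of $C^\infty(C)$-modules, into a free factor tensored with elementary super-Koszul pieces on single generators $u^i \mapsto \partial_{y_i}$, each of which is directly verified to be acyclic in positive antighost degree. Künneth delivers acyclicity of the $E_1$-complex, the spectral sequence degenerates, and (\ref{Koszul complex in deg 1}) is acyclic on a neighborhood of $C$.

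Off $C$ some $J^*(v^j)$ is a local unit, and the standard Koszul contraction provides an explicit contracting homotopy for the full complex on such open sets; a partition-of-unity argument glues these local statements into the global acyclicity assertion. The main obstacle is the $E_1$-analysis: super-Koszul complexes on arbitrary odd generators need not be acyclic in general, and here the required factorization into elementary acyclic pieces hinges essentially on freeness of the action together with the Frobenius straightening of $\cD|_C$.
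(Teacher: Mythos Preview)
Your argument is correct. The paper itself gives no detailed proof, deferring instead to an adaptation of Theorem~9.1 in Henneaux--Teitelboim, where one chooses graded local coordinates on $\cM$ in which the constraints $J^\sharp_0(v^j), J^\sharp_1(u^i)$ become coordinate functions and writes down a single explicit contracting homotopy for the whole Koszul complex at once, gluing by a partition of unity. Your route is genuinely different: you exploit the splitting $\ov\g = \h[1]\oplus\g$ to turn the Koszul complex into a bicomplex and run the associated spectral sequence, reducing the problem to two classical facts --- the ordinary Koszul resolution for the regular sequence $\{J^*(v^j)\}$ on the $E_0$-page, and a polynomial-times-exterior ``super-Koszul'' complex governed by the pointwise injectivity of $\psi|_C$ on the $E_1$-page. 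This has the virtue of isolating exactly which part of the regularity hypothesis is used where (regularity of $J$ for $E_0$, freeness of the action for $E_1$), whereas the Henneaux--Teitelboim homotopy treats all constraints uniformly and is shorter once graded coordinates are in hand. One small remark: your invocation of Frobenius for $\cD|_C$ is not actually needed --- for the $E_1$ acyclicity it suffices that the $\psi(u^i)|_C$ are pointwise linearly independent, so that they can be completed to a local frame of $TM|_C$ and the complex factors over $C^\infty(C)$ into elementary pieces; no straightening of the distribution is required.
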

\begin{proof}
It follows from a simple adaptation of the proof of Theorem 9.1 in \cite{Henneaux's Book}.
\end{proof}

Proposition \ref{acyclicity of koszul complex in degree one} shows that $Q_0$ homologically describes the procedure of passing to the constraint submanifold $\cC \coloneqq (J, J^{\sharp})^{-1}(0)$ associated to the sheal of ideals $\cJ.$ On the other hand, for $P \in A^{0, 0} = C^{\infty}(\cM),$   $\{Q_0, P\} \in A^{1, 0}$ is given by 
$$
\{Q_0, P\} = (-1)^{|P| - 1}\{J^{\sharp}_1(u^i), P\}u^*_i + \{J^{\sharp}_0 (v^j), P\}v^*_j,
$$ 
which is zero if and only if $\{J^{\sharp}_1(u^i), P\} = \{J^{\sharp}_0 (v^j), P\} = 0;$ hence, since $J^{\sharp}: \ov\g [-1] \rightarrow C^{\infty}(\cM)$ is a moment map for the infinitesimal action of $\ov\g,$ the inner derivation $\{Q_0, \, \cdot \, \}$ also captures gauge invariance. 

Notice, however, that $Q_0$ does not necessarily satisfy $\{Q_0, Q_0\} = 0.$ Starting from $Q_0$ and trying to implement this condition,   we arrive at the following function of degree two  and total ghost number one,
\begin{equation}\label{BRST charge for degree one hamiltonian reduction}
Q_{\ov\g} = J^{\sharp}_1(u^i) u^*_i + J^{\sharp}_0(v^j)  v_j^* - \frac{1}{2} c_k^{ij} \, u^*_i \, u^*_j \, u^k - d^{mn}_{p} u^*_{m} \, v^*_{n} \, v^p,
\end{equation}
where $\{c^{ij}_k\}$ and $\{d^{mn}_p\}$ are the structural constants of the graded Lie algebra $\ov\g$ (the former are associated to the basis $\{u_i\}$ of $\g$ and the latter are obtained by writting the action of $\g$ on $\h[1]$ with respect of to the basis $\{v_m\}$ of $\h[1]$), which, due to the Jacobi identity for the graded Lie algebra 
$\ov\g,$ satisfies $\{Q_{\ov\g}, Q_{\ov\g}\} = 0.$ Then, from  the graded Jacobi identity for $\{\, \cdot \, , \, \cdot \, \},$ one concludes that $\{Q_{\ov\g}, \{Q_{\ov\g},\,  \cdot \, \}\}$ $= 0,$ that is, $\{Q_{\ov\g},\,  \cdot \, \}$ is a homological vector field on $\cN. $ Therefore, we consider the differential graded algebra $(\cA, \{Q_{\ov\g}, \, \cdot \, \}).$ \par  

Observe that, since the constraints we are considering are regular and come from a graded Lie algebra action, we neither need to resort to the Koszul-Tate resolution to obtain a resolution of $C^\infty(\cM)/\cJ$ nor to homological perturbation methods to construct the BRST charge (see Remark \ref{remark: BFV in general}). So the BFV procedure for the hamiltonian reduction of $\cM  \coloneqq T^*[1]M$ is really the graded analog of what was considered by Kostant-Sternberg in \cite{Kostant}. The case of regular irreducible first class constraints on $\cM$ was treated in \cite{Sharapov}.\par 

%%%%%%%%%%%%%%%%%%%%%%%%%%%%%%%%%%%%%
\subsection{BRST cohomology at degree zero} Next, we show that the differential graded algebra $(\cA, $ $\{Q_{\ov\g}, \cdot\})$ is indeed the BFV algebra for the degree one reduction of $\cM \coloneqq T^*[1]M$ by the hamiltonian action of the graded Lie algebra $\ov\g.$ More precisely, we will prove the following theorem, which is just a particular case of a more general statement found in \cite{AleZabzine} (c.f. Prop.~10 therein).

\begin{theorem}\label{main result}
The cohomology of the complex $(\cA, \{Q_{\ov\g}, \, \cdot \, \})$ at total ghost number zero is so that the natural map
%recovers the degree $-1$ Poisson algebra of the reduced space $\cC_{red}.$  More precisely, the natural map
\begin{align}
\Phi : H^{0, \bullet}_{\{Q_{\ov\g}, \, \cdot \, \}}(\cA) & \longrightarrow \frac{N(\cJ)}{\cJ} \label{Map 0th cohomology and N(I)/I in the dg one case} \\
[(x^{0,0}, x^{1,1}, \cdots)] & \longmapsto \overline{x^{0, 0}} \nonumber
\end{align} 
is an isomorphism of degree $-1$ Poisson algebras.
\end{theorem}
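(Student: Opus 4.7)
The plan is to compute the cohomology of the complex $(\cA, D)$ with $D \coloneqq \{Q_{\ov\g}, \, \cdot \, \}$ at total ghost number zero via the spectral sequence associated to the filtration by ghost number. Writing $Q_{\ov\g} = Q_0 + Q_1$, with $Q_0 \coloneqq J^\sharp_1(u^i) u^*_i + J^\sharp_0(v^j) v^*_j \in A^{1,0}$ the linear piece and $Q_1$ the structural-constant piece in $A^{2,1}$, and using (\ref{relation of the bigrading under de dg -1 Poisson structure}), the differential decomposes as $D = \delta + D' + D''$, where $\delta: A^{p,q} \to A^{p,q-1}$ is the Koszul differential extracted from $\{Q_0, \, \cdot \, \}$, $D': A^{p,q} \to A^{p+1,q}$ gathers the Chevalley--Eilenberg-like contributions of both $Q_0$ and $Q_1$, and $D'': A^{p,q} \to A^{p+2, q+1}$ collects the remaining piece coming from $Q_1$.

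Consider the decreasing $D$-stable filtration $F^n\cA \coloneqq \bigoplus_{p \geq n} A^{p,\b}$. Its associated graded differential is $\delta$, and for each fixed $p$ the subcomplex $(A^{p,\b}, \delta)$ is isomorphic to $S^p(\ov\g^*[-1])$ tensored with the Koszul complex (\ref{Koszul complex in deg 1}). By Proposition \ref{acyclicity of koszul complex in degree one}, this complex is acyclic except in antighost degree zero, where the cohomology equals $C^\infty(\cC) \otimes S^p(\ov\g^*[-1])$ with $C^\infty(\cC) \cong C^\infty(\cM)/\cJ$. Hence $E_1^{p,q}$ vanishes for $q > 0$ and equals $C^\infty(\cC) \otimes S^p(\ov\g^*[-1])$ for $q = 0$. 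The next step is to identify the induced differential $d_1 = D'$ on $E_1^{\b, 0}$ with the Chevalley--Eilenberg differential for the induced $\ov\g$-action on $C^\infty(\cC)$, thereby yielding $E_2^{p,0} = H^p_{CE}(\ov\g;\, C^\infty(\cC))$. In particular, $E_2^{0,0} = C^\infty(\cC)^{\ov\g} \cong N(\cJ)/\cJ$ by (\ref{gauge invariant function/normalizer}).

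At total ghost number zero the only potentially non-trivial entries on $E_\infty$ lie on the diagonal $(p, p)$; these vanish for $p > 0$ since $E_1$ is concentrated in antighost degree zero, while every higher differential $d_r$ ($r \geq 2$) emanating from $E_r^{0,0}$ has codomain in positive antighost degree and is therefore zero. Thus $E_2^{0,0} = E_\infty^{0,0}$, and tracing through the construction shows that the resulting edge isomorphism $H^{0, \b}_D(\cA) \xrightarrow{\sim} N(\cJ)/\cJ$ is exactly $\Phi$. That $\Phi$ intertwines the degree $-1$ Poisson brackets then follows from the fact that $D$ is inner (so the BFV bracket descends to cohomology) together with the observation that the bracket of two total-ghost-zero cocycles has $(0,0)$-component equal to the ambient Poisson bracket of their $(0,0)$-parts, which, modulo $\cJ$, yields precisely the induced bracket on $N(\cJ)/\cJ$. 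I expect the main obstacle to be the explicit identification of $d_1$ with the Chevalley--Eilenberg differential, which requires carefully sorting out the contributions of the cubic and quartic ghost terms in $Q_1$ to the $\ov\g$-action on both the scalar factor $C^\infty(\cC)$ and on the symmetric algebra $S^\b(\ov\g^*[-1])$ of ghosts, as well as checking the sign conventions coming from the degree $-1$ bracket on $\cN$.
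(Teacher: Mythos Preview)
Your spectral sequence argument is sound and is, at bottom, the same mechanism as the paper's proof: both rest on the bigrading by ghost/antighost number together with the acyclicity of the Koszul columns (Proposition~\ref{acyclicity of koszul complex in degree one}). The paper simply unrolls the spectral sequence into an explicit staircase: in Step~2 (injectivity) and Step~3 (surjectivity) it constructs, level by level, elements $c^{k,k+1}$ and $a^{k,k}$ by invoking Koszul acyclicity at each stage---exactly the hands-on version of what your filtration does abstractly. Step~4 (Poisson compatibility) is identical to your last paragraph.

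Two remarks. First, the piece $D'': A^{p,q} \to A^{p+2, q+1}$ that you allow for is in fact zero: the $A^{2,1}$-component $Q_1$ of the charge has \emph{constant} coefficients in $C^{\infty}(\cM)$, so its Schouten-bracket contribution vanishes and only the ghost/antighost pairing survives, which lands in $A^{p+1,q}$. The paper observes this and works from the outset with the clean splitting $D = \delta_V + \delta_H$. Your argument is unaffected, but the double complex is honestly bicomplex and the $E_2$-degeneration you obtain on the total-ghost-zero diagonal is automatic.

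Second, the ``main obstacle'' you anticipate is not one. You never need to identify $d_1$ with the full Chevalley--Eilenberg differential; you only need $E_2^{0,0} = \ker\big(d_1\colon E_1^{0,0} \to E_1^{1,0}\big)$, and on $E_1^{0,0} = C^{\infty}(\cM)/\cJ$ the map $d_1$ is simply $\overline f \mapsto \overline{\{J_1^{\sharp}(u^i),f\}}\,u^*_i + \overline{\{J_0^{\sharp}(v^j),f\}}\,v^*_j$, whose kernel is visibly $N(\cJ)/\cJ$. A genuine technical point you do omit is convergence: the filtration by ghost number is not bounded above on $\cA^0$, but since $D$ raises function degree by one you may work at fixed function degree $\ell$, where each $\cA^{0,\ell}$ meets only finitely many $A^{k,k}$ and the filtration becomes finite. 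The paper handles this same finiteness explicitly at the end of Step~3.
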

%%%
 Under the hypothesis of the Cattaneo-Zambon reduction theorem for $\cM \coloneqq T^{*}[1]M$ (Theorem \ref{marsden-weinstein cattaneozambon}), the above theorem provides the degree one version of the classical result by Kostant-Sternberg on homological reduction in the usual (nongraded) hamiltonian context (\cite{Kostant}). 

From (\ref{relation of the bigrading under de dg -1 Poisson structure}) and the fact that terms in $A^{2, 1}$ appearing in $Q_{\ov\g}$ have constant coefficients, it follows that 
\begin{equation}
\{\, Q_{\ov\g}, \, A^{p, q}\, \} \subseteq A^{p, q - 1} \oplus A^{p+1, q} .
\end{equation} 
Therefore, the action of $\{Q_{\ov\g},\,  \cdot \, \}$ on $\cA$ splits as shown by  the following diagram:
%%%%
\begin{equation}\label{diagram: splitting of Q}
\begin{tikzcd}
A^{0, 0} \arrow{r}{\delta_H} & A^{1, 0} \arrow{r}{\delta_H} & A^{2, 0} \arrow{r}{\delta_H} & \cdots \\
%%%%%%
A^{0, 1} \arrow{u}{\delta_V} \arrow{r}{\delta_H} & A^{1, 1} \arrow{r}{\delta_H} \arrow{u}{\delta_V} & A^{2, 1} \arrow{r}{\delta_H} \arrow{u}{\delta_V} & \cdots \\
%%%%%%
A^{0, 2} \arrow{r}{\delta_H} \arrow{u}{\delta_V} & A^{1, 2} \arrow{u}{\delta_V} \arrow{r}{\delta_H} & A^{2, 2} \arrow{r}{\delta_H} \arrow{u}{\delta_V}& \cdots  \\ 
\vdots \arrow{u} & \vdots \arrow{u} & \vdots \arrow{u}
\end{tikzcd}
\end{equation}
%%%%
In other words, $\{Q_{\ov\g}, \, \cdot \, \}$ can be written as 
\begin{equation}\label{splitting of the brst differential in two derivations}
\{Q_{\ov\g}, \, \cdot \, \} = \delta_H + \delta_V.
\end{equation}
The condition  that $\{Q_{\ov\g},\,  \cdot \, \}$ squares to zero implies that the maps $\delta_V$ and $\delta_H$ are anticommuting differentials. Moreover, the first column of the diagram (\ref{diagram: splitting of Q}) gives precisely the complex (\ref{Koszul complex in deg 1}), which is the graded version of the Koszul complex (\ref{koszulcomplex deg 0}), whereas the other columns are obtained from the first one by letting $\delta_V$ act as the identity on the parts coming from $\ov\g^*.$ So the differential $\delta_V$ is simply the Koszul differential; in particular, by Proposition \ref{acyclicity of koszul complex in degree one}, it follows that $H^k_{\delta_V} = 0,$ for $k \neq 0.$ On the other hand, the differential $\delta_H$ can be seen as the graded version of the Chevalley-Eilenberg differential.\par 

Notice that the grading of $\cA \coloneqq C^{\infty}(\cN)$ by function degree and its Poisson structure naturally descend to the cohomology of $(\cA, \{Q_{\ov\g},\,  \cdot \, \}),$ so we have the following. 

\begin{lemma}
The cohomology $H^\bullet_{\{Q_{\ov\g}, \, \cdot \, \} }(\cA),$ which is computed with respect to the total ghost number, is naturally graded by the function degree. More precisely, for all $p \in \bZ,$ we have
\begin{equation}\label{brst cohomology graded by function degree}
H^{p, \bullet}_{\{Q_{\ov\g}, \, \cdot \, \}}(\cA) \coloneqq H^{p}_{\{Q_{\ov\g}, \, \cdot \, \}}(\cA) = \bigoplus_{ \substack{k \geq p \\ k \in \bZ}} H_{\{Q_{\ov\g}, \, \cdot \, \}}^{p, k}(\cA),
\end{equation}
where $H_{\{Q_{\ov\g}, \, \cdot \, \}}^{p, k}(\cA)$ denotes the space of all cohomology classes of function degree $k$ at total ghost number $p.$ Moreover, the Poisson structure on $\cA$ descends to the graded algebra $H^{0, \bullet}_{\{Q_{\ov\g}, \, \cdot \, \}}(\cA).$
\end{lemma}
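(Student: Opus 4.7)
My plan is to extract the function-degree grading on cohomology from a bigrading on the complex $(\cA, \{Q_{\ov\g}, \, \cdot \, \})$ that is preserved by the differential. First, I will verify that $Q_{\ov\g}$ is homogeneous of function degree $2$ with respect to the $\bZ$-grading on $C^{\infty}(\cN)$. Indeed, in the graded manifold $\cN = \cM \times T^*[1]\ov\g^*[-1]$, the generators of $\g^* \subset \ov\g^*[-1]$ sit in function degree $1$, those of $\h[1]^* \subset \ov\g^*[-1]$ in function degree $2$, those of $\g \subset \ov\g$ in function degree $0$, and those of $\h[1] \subset \ov\g$ in function degree $-1$. Combined with $|J^\sharp_1(u^i)| = 1$ and $|J^\sharp_0(v^j)| = 0$ as elements of $C^{\infty}(\cM)$, a direct inspection shows that each of the four summands in (\ref{BRST charge for degree one hamiltonian reduction}) has function degree $2$. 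Since the canonical bracket on $\cN$ has function degree $-1$, it follows that $\{Q_{\ov\g}, \, \cdot \, \}$ shifts function degree by $+1$ (in addition to shifting total ghost number by $+1$).

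Hence $\{Q_{\ov\g}, \, \cdot \, \}$ maps $\cA^{p, k}$ into $\cA^{p+1, k+1}$ and preserves the quantity $k - p$. Setting $\cA_s \coloneqq \bigoplus_{p} \cA^{p, p+s}$, I obtain a decomposition of $(\cA, \{Q_{\ov\g}, \, \cdot \, \})$ into subcomplexes indexed by $s \in \bZ$, whose cohomologies assemble directly into the right-hand side of (\ref{brst cohomology graded by function degree}). To see that the sum ranges only over $k \geq p$, a simple count suffices: a monomial in $A^{p', q'}$ involving $n$ multivector generators on $\cM$, $\alpha$ generators from $\g^*$ and $\beta$ from $\h[1]^*$ (with $\alpha + \beta = p'$), and $\gamma$ from $\g$ and $\delta$ from $\h[1]$ (with $\gamma + \delta = q'$), has total ghost number $p = \alpha + \beta - \gamma - \delta$ and function degree $k = n + \alpha + 2\beta - \delta$; thus $k - p = n + \beta + \gamma \geq 0$.

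The descent of the Poisson structure to $H^{0, \bullet}_{\{Q_{\ov\g}, \, \cdot \, \}}(\cA)$ is then a standard consequence of $\{Q_{\ov\g}, \, \cdot \, \}$ being an inner derivation: the graded Jacobi identity makes it a derivation of the bracket, so the bracket of two cocycles is a cocycle, and a bracket involving a coboundary is itself a coboundary. Restricting to total ghost number zero then yields the claimed degree $-1$ Poisson algebra structure on $H^{0, \bullet}_{\{Q_{\ov\g}, \, \cdot \, \}}(\cA)$, inherited directly from $\cA$. The essential content of the lemma is therefore the function-degree computation for $Q_{\ov\g}$ together with the lower bound $k \geq p$; the remaining assertions follow from general facts about bigraded complexes endowed with compatible Poisson brackets.
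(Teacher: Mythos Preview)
Your proof is correct and follows essentially the same approach as the paper: both arguments rest on the observation that $\{Q_{\ov\g},\,\cdot\,\}$ raises function degree by one (the paper phrases this as $\delta_H$ and $\delta_V$ each being derivations of function degree one), so that the complex splits along the diagonals $k-p=\mathrm{const}$. Your treatment is in fact more explicit than the paper's, since you spell out the function-degree count term by term in $Q_{\ov\g}$ and give a monomial argument for the bound $k\geq p$, which the paper simply asserts; the descent of the Poisson structure is handled identically in both (kernel a Poisson subalgebra, image a Poisson ideal, via the graded Jacobi identity).
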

\begin{proof}
The decomposition (\ref{brst cohomology graded by function degree}) follows from the fact that both $\delta_H$ and $\delta_{V}$ are derivations of function degree one. To prove that $H^{0, \bullet}_{\{Q_{\ov\g}, \, \cdot \, \}}(\cA)$ is a Poisson algebra, one can directly verify that  $ker({ \{Q_{\ov\g}, \,  \cdot \,\}}: \cA^0 \rightarrow \cA^1 )$  is a Poisson subalgebra of $\cA,$ whereas $im({ \{Q_{\ov\g}, \,  \cdot \,\}}: \cA^{-1} \rightarrow \cA^0) \subset$ is a Poisson ideal of this kernel. \end{proof}

Given all this, the proof of Theorem \ref{main result} consists of a standard argument, but we provide it here for the sake of completeness. It will be divided into the following four steps.\par 

\vspace{0.3cm} 
\textit{Step 1:} Firstly, we claim that the map $\Phi : H^{0, \bullet}_{\{Q_{\ov\g}, \, \cdot \, \}}(\cA) \rightarrow N(\cJ)/\cJ$ is well defined. To verify that, let $[a] \in H^{0, \bullet}_{\{Q_{\ov\g}, \, \cdot \, \}}(\cA),$ where  $a = \sum_{i \in \bN} a^{i, i}\in \cA^0$, and observe that 
\begin{equation}\label{bracker of Q and a00}
\{Q_{\ov\g}, a^{0,0}\} = (-1)^{(|a^{0, 0}| -1)} \{J^{\sharp}_1 u^i, a^{0, 0}\}u^*_i + \{J^{\sharp}_0 v^j, a^{0, 0}\}v^*_j \in A^{1, 0}.
\end{equation}
From the diagram (\ref{diagram: splitting of Q}), one can see that among all $\{Q_{\ov\g}, a^{i,i}\},$ $i \in \mathbb{N},$ the only one that has a component in $A^{1, 0}$ is $\{Q_{\ov\g}, a^{1,1}\}$. Thus, since $\{Q_{\ov\g}, a\} = 0,$ this component cancels out with $\{Q_{\ov\g}, a^{0, 0}\}$.  By writting a generic expression for $a^{1, 1} \in A^{1, 1},$ 
%%%
$$ \alpha^{i_1}_{j_1} u^*_{i_1} u^{j_1} + \beta^{i_2}_{j_2} u^*_{i_2} v^{j_2} + \gamma^{i_3}_{j_3} v^*_{i_3} u^{j_3} + \theta^{i_4}_{j_4} v^*_{i_4} u^{j_4},$$ 
where coefficients are in $C^{\infty}(\cM),$
using this to compute $\{Q, a^{1, 1}\}$ and comparing the $A^{1, 0}$-components of the result with (\ref{bracker of Q and a00}), one concludes that $a^{0, 0} \in N(\cJ).$  On the other hand, for $[a], [b] \in H^{0, \bullet}_{\{Q_{\ov\g}, \, \cdot \, \}}(\cA)$ such that $[a] = [b],$ we have %it is clear that they satisfy 
$$a - b \in im(\{Q_{\ov\g},\, \cdot \, \}: \cA^{-1} \rightarrow \cA^{0}),$$  
which, in particular, implies that $a^{0, 0} - b^{0, 0} \in \cJ.$ Therefore, we conclude that the map $\Phi : H^{0, \bullet}_{\{Q_{\ov\g}, \, \cdot \, \}}(\cA) \rightarrow N(\cJ)/\cJ$ is well defined.

\textit{Step 2 (Injectivity)}: Let  $[a], [b] \in H^{0, \bullet}_{\{Q_{\ov\g}, \, \cdot \, \}}(\cA)$ be such that $a^{0, 0} = b^{0, 0}$ in $N(\cJ)/\cJ,$ that is, $a^{0, 0} - b^{0, 0} \in \cJ.$ Then, there exists $c^{0, 1} \in A^{0, 1}$ such that $\delta_V (c^{0, 1}) = a^{0, 0} - b^{0, 0}. $ Since $a, b \in \cA$ are cocycles, we have 
$$
\begin{cases}
\delta_H(a^{0, 0}) + \delta_V(a^{1, 1}) & = 0\\
\delta_H(b^{0, 0}) + \delta_V(b^{1, 1}) & = 0;
\end{cases}
$$
from this, we obtain that
$$
\delta_V(a^{1, 1} - b^{1, 1}) = - \delta_H(a^{0, 0} - b^{0, 0}) = \delta_H \delta_V(c^{0, 1}) = \delta_V\delta_H(c^{0, 1}),  
$$
that is, $\delta_V(a^{1, 1} - b^{1, 1} - \delta_H(c^{1, 0})) = 0,$ which shows that $a^{1, 1} - b^{1, 1} - \delta_H(c^{1, 0}) \in A^{1, 1}$ is a $\delta_V$-cocycle. By the acyclicity of the Koszul complex, we obtain that there exists $c^{1, 2} \in A^{1, 2}$ such that $$\delta_V(c^{1, 2}) = a^{1, 1} - b^{1, 1} - \delta_H(c^{1, 0}).$$ Now, notice that $c^{0, 1} + c^{1, 2} \in \cA^{-1}$ satisfies 
\begin{align*}
\{Q_{\ov\g}, c^{0, 1} + c^{1, 2}\} & = (\delta_V + \delta_H)(c^{0, 1} + c^{1, 2})\\
& =  (a^{0, 0} - b^{0, 0}) + (b^{0, 0} - b^{1, 1}) + \delta_H(c^{1, 2}).
\end{align*}

We conclude by induction: observe that the equations

\begin{equation}\label{eq for a and b cocycles}
\begin{cases}
\delta_H(a^{n, n}) + \delta_V(a^{n+1, n+1}) &= 0\\
\delta_H(b^{n, n}) + \delta_V(b^{n+1, n+1}) &= 0;
\end{cases}
\end{equation}
 hold for all $n \in \mathbb{N}$, since $a, b \in \cA^0$ are cocycles. For a fixed $n \in \mathbb{N},$ assume that \begin{equation}\label{induction hyp for the inject.}
\delta_V(c^{n, n+1}) + \delta_H(c^{n-1, n}) = a^{n, n} - b^{n, n}.
\end{equation}
Then, using (\ref{eq for a and b cocycles}) and (\ref{induction hyp for the inject.}), we obtain  
\begin{align*}
\delta_V(a^{n+1, n+1} - b^{n+1, n+1}) & =  - \delta_H(a^{n, n} - b^{n, n}) \\
& = - \delta_H(\delta_V(c^{n, n+1}) + \delta_H(c^{n-1, n})) \\
& =  \delta_V\delta_H(c^{n, n+1}), 
\end{align*}
that is,  $$\delta_V(a^{n+1, n+1} - b^{n+1, n+1} -  \delta_H(c^{n, n+1}) = 0,$$ which shows that $a^{n+1, n+1} - b^{n+1, n+1} -  \delta_H(c^{n, n+1}) \in A^{n+1, n+1}$ is a $\delta_V$-cocycle. Since the Koszul complex is acyclic, we conclude that there exists $c^{n+1, n+2} \in A^{n+1, n+2}$ such that $$\delta_V(c^{n+1, n+2}) + \delta_H(c^{n, n+1}) = a^{n+1, n+1} - b^{n+1, n+1},$$
which is just (\ref{induction hyp for the inject.}) for $n+1.$ Hence, the induction produces $c = \sum_{k \in \mathbb{N}} c^{k, k+1} \in \cA^{-1}$ such that $\{Q_{\ov\g}, \, c\} = a - b,$ which means exactly that $[a] = [b]$ in $H^{0, \bullet}_{\{Q_{\ov\g}, \, \cdot \, \}}(\cA).$ In this way, we conclude that the map $\Phi : H^{0, \bullet}_{\{Q_{\ov\g}, \, \cdot \, \}}(\cA) \rightarrow N(\cJ)/\cJ$ is indeed injective.

\textit{Step 3 (Surjectivity) }: Let $f \in N(\cJ)$ be a homogeneous multivector field. We will construct $a = f + \sum_{k \geq 1} a^{k, k}$ satisfying $\{Q_{\ov\g}, \, a\} = 0.$ Firstly, notice that 
$$\{Q_{\ov\g}, f\} = \delta_H(f) = \{J^{\sharp}_0(v^j), f\}v^*_j + (-1)^{(|f| - 1)} \{J^{\sharp}_1(u^i), f\}u^*_i \in A^{1, 0}.$$
Since $f \in N(\cJ),$ it follows that there exists $a^{1, 1} \in A^{1, 1}$ such that $\delta_V(a^{1, 1}) = - \delta_H(f).$ Hence, we see that $$\{Q_{\ov\g}, f + a^{1, 1}\} = \delta_H(f) + \delta_V(a^{1, 1}) + \delta_H(a^{1, 1}) = \delta_H (a^{1, 1}).$$ 
By the anticommutative of $\delta_V$ and $\delta_H,$ we obtain $$ \delta_V\delta_H (a^{1, 1}) = - \delta_H\delta_V(a^{1, 1}) = \delta_H \delta_H(f) = 0,$$ which says that $\delta_H (a^{1, 1}) \in A^{2, 1}$ is a $\delta_V$-cocycle. Thus, the acyclicity of the Koszul complex implies that there exists $a^{2, 2} \in A^{2, 2}$ satisfying $\delta_V(a^{2, 2}) = - \delta_H(a^{1, 1}).$ Now, for $f + a^{1, 1} + a^{2, 2},$ we obtain $$\{Q_{\ov\g}, f + a^{1, 1} + a^{2, 2}\} = \delta_H(a^{2, 2}).
$$ 
The induction step is then clear. Hence, we obtain $a = f + \sum_{k \geq 1} a^{k, k} \in \cA^ 0$ satisfying $\{Q_{\ov\g}, a\} = 0.$ Since the function degree of $a^{k, k} \in A^{k, k},$ for all $k \in \bN,$ is necessarily equal to the function degree of the homogeneous multivector field $f \in N(\cJ),$ it follows that $a^{k, k} \neq 0$ only for finitely many $k \in \bN.$  Then $a = f + \sum_{k \geq 1} a^{k, k}$ defines a cohomology class $[a] \in  H^{0, \bullet}_{ \{Q_{\ov\g}, \,  \cdot \,\} }(\cA)$ such that $\Phi([a]) = f,$ which shows that $\Phi : H^{0, \bullet}_{\{Q_{\ov\g}, \, \cdot \, \}}(\cA) \rightarrow N(\cJ)/\cJ$ is surjective.\par  
%%%
\textit{Step 4:} Finally, we claim that  $\Phi : H^{0, \bullet}_{\{Q_{\ov\g}, \, \cdot \, \}}(\cA) \rightarrow N(\cJ)/\cJ$ is a Poisson algebra morphism. For $a, b \in \cA^0,$ where $a = \sum_{i \in \bN}a^{i, i}$ and $b = \sum_{j \in \bN} b^{j, j},$ we have $$\{a, b\} \coloneqq \sum_{i, j \in \bN} \{a^{i, i}, b^{j, j}\};$$ however, from (\ref{relation of the bigrading under de dg -1 Poisson structure}), it is easy to see that among all the brackets $ \{a^{i, i}, b^{j, j}\},$ $i, j \in \bN,$ the only one that has a component in  $A^{0, 0}$ is $\{a^{0, 0}, b^{0, 0} \}.$ Hence, for $[a], [b] \in H^{0, \bullet}_{ \{Q_{\ov\g}, \,  \cdot \,\} }(\cA),$ we obtain that $$\Phi(\{[a], [b]\}) \coloneqq (\{a, b\})^{0, 0} = \{a^{0, 0}, b^{0, 0}\} = \{ \Phi([a]), \Phi([b])\},$$
which shows that the map $\Phi : H^{0, \bullet}_{\{Q_{\ov\g}, \, \cdot \, \}}(\cA) \rightarrow N(\cJ)/\cJ$ is a Poisson algebra morphism. \par 

All together, the above four steps prove that the map $\Phi : H^{0, \bullet}_{\{Q_{\ov\g}, \, \cdot \, \}}(\cA) \rightarrow N(\cJ)/\cJ$ is indeed an isomorphism of degree $-1$ Poisson algebras, which concludes the proof of Theorem \ref{main result}. 

\section{The homotopy structure of Poisson reduction}\label{section: the homotopy nature of Poisson reduction}

In this section, we show how Poisson structures can be incorporated into the degree one BFV construction. This leads to the homotopy structure behind  Poisson reduction and provides the homological version of the generalized Marsden-Weinstein theorem (Theorem \ref{generalized context - Marsden Weinstein thm}).  
\vspace{0.2cm}

\subsection{A derived bracket construction}\label{subsection: derived bracket}
If the pair $(\psi \coloneqq J^{\sharp}|_{\g}, J)$ associated to the moment map $J^{\sharp}: \ov\g[-1] \rightarrow C^{\infty}(\cM)$ is regular, %(in the case that the reduced space $\cC_{red}$ exists,) 
Theorem \ref{main result} says that we have, for each $k \in \bN,$ the isomorphism of vector spaces
\begin{equation}\label{H0k brst cohomology is deg k multivector fields on the reduced space}
H^{0, k}_{\{Q_{\ov\g},\, \cdot\, \}}(\cA) \cong \mathfrak{X}^k(C_{red}).
\end{equation}
In particular, for $k = 0,$ we obtain that $H^{0, 0}_{\{Q_{\ov\g},\, \cdot\, \}}(\cA)$ is isomorphic to $C^{\infty}(C_{red})$ as an algebra.\par
On the other hand, a weak Poisson structure $\pi \in \mathfrak{X}^2(M)$ descends to a Poisson bivector  $\pi_{red} \in \mathfrak{X}^2(C_{red})$ if and only if $\pi \in N(\cJ),$ where $\cJ \subset C^{\infty}(\cM)$ is the homogeneous ideal generated by the constraints given by the moment map  $J^{\sharp}: \ov\g[-1] \rightarrow C^{\infty}(\cM).$ The identification (\ref{H0k brst cohomology is deg k multivector fields on the reduced space}), for $k = 2,$ implies the following result.

\begin{corollary}\label{cohomological condition for the reducibility of pi}
Let $\pi \in  C_2^{\infty}(\cM) = \mathfrak{X}^2(M)$ be a weak Poisson structure on $M.$ Then $\pi$ is reducible with respect to the submanifold $C \coloneqq J^{-1}(0)$ and the integrable distribution $\cD = \langle J_1^{\sharp}(u) \rangle_{u \in \g_0}$ if and only if there exists a cohomology class in $[\Pi] \in H^{0, 2}_{\{Q_{\ov\g}, \, \cdot \, \}}(\cA)$ for which $\Pi^{0, 0} = \pi,$ that is, $\Phi ([\Pi]) = 
\overline{\pi} \in N(\cJ)/\cJ,$ for $\Phi : H^{0, \bullet}_{\{Q_{\ov\g}, \, \cdot \, \}}(\cA) \rightarrow N(\cJ)/\cJ$ the canonical map in (\ref{Map 0th cohomology and N(I)/I in the dg one case}). 
\end{corollary}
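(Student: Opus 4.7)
The plan is to deduce this statement directly from Theorem \ref{main result}, specialized to function degree two. Recall from the definition of reducibility in $\S$\ref{subsection: reducible Poisson str} that a weak Poisson bivector $\pi \in C^\infty_2(\cM)$ induces a Poisson structure $\pi_{red}$ on $C_{red}$ precisely when $\pi \in N(\cJ)$; the weak Poisson condition $\{\pi,\pi\} \in \cJ$ ensures that the induced bivector satisfies the Jacobi identity, while $\pi \in N(\cJ)$ is what makes its class $\overline{\pi} \in N(\cJ)/\cJ$ correspond to a well-defined element of $C^\infty(\cC_{red})$ via the identification (\ref{gauge invariant function/normalizer}). Hence reducibility of $\pi$ is equivalent to $\overline{\pi} \in N(\cJ)/\cJ$ being a well-defined class at function degree two.

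Next, I would invoke Theorem \ref{main result}, which provides the isomorphism of degree $-1$ Poisson algebras $\Phi : H^{0,\bullet}_{\{Q_{\ov\g},\cdot\}}(\cA) \rightarrow N(\cJ)/\cJ$. Since both $\delta_H$ and $\delta_V$ preserve function degree, this isomorphism splits as a direct sum over function degree; in particular it restricts to an isomorphism at function degree two. This reduces the corollary to the assertion that $\overline{\pi} \in N(\cJ)/\cJ$ lies in the image of $\Phi$ if and only if there is a cocycle $\Pi \in \cA^{0,2}$ whose $(0,0)$-component equals $\pi$ on the nose (not merely up to $\cJ$).

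For the forward implication, I would use the explicit surjectivity argument from Step 3 of the proof of Theorem \ref{main result}: starting from $f := \pi \in N(\cJ)$, the inductive construction there produces a cocycle $\Pi = \pi + \sum_{k \geq 1} \Pi^{k,k}$ with $\Pi^{0,0} = \pi$ exactly, using the acyclicity of the Koszul complex (Proposition \ref{acyclicity of koszul complex in degree one}) at each stage. Because $\pi$ has function degree two, the same degree is forced on every $\Pi^{k,k}$, so $[\Pi] \in H^{0,2}_{\{Q_{\ov\g},\cdot\}}(\cA)$ and $\Phi([\Pi]) = \overline{\pi}$. For the reverse implication, one applies the well-definedness argument of Step 1 of the same proof: expanding $\{Q_{\ov\g}, \Pi\} = 0$ and projecting onto $A^{1,0}$ shows that $\Pi^{0,0} \in N(\cJ)$, whence the hypothesis $\Pi^{0,0} = \pi$ gives $\pi \in N(\cJ)$.

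The only point that requires any attention is verifying that everything remains internal to function degree two, but this is automatic because the Poisson bracket on $\cA$ is of function degree $-1$ while $Q_{\ov\g}$ has function degree $2$, and $\{Q_{\ov\g},\cdot\}$ thus raises function degree by one uniformly on the bicomplex (\ref{diagram: splitting of Q}); consequently both $\delta_H$ and $\delta_V$ preserve function degree when acting on the slice of fixed function degree, and the inductive surjectivity argument stays within $\bigoplus_{k \geq 0} A^{k,k} \cap \cA^{\bullet, 2}$. No further obstacle arises, and the corollary follows.
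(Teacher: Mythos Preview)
Your argument is correct and matches the paper's own approach: the corollary is stated there as an immediate consequence of the isomorphism $\Phi$ of Theorem~\ref{main result} restricted to function degree two, and you have simply unpacked both directions by invoking Steps~1 and~3 of that theorem's proof. The only wording slip is that $\delta_H$ and $\delta_V$ do not ``preserve'' function degree but each raise it by one; since they do so equally, your conclusion that the inductive construction stays at fixed function degree is nonetheless correct.
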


Now, fix $\pi \in \mathfrak{X}^2(M)$ a reducible weak Poisson structure on $M,$ and  let $[\Pi] \in H^{0, 2}_{\{Q_{\ov\g}, \, \cdot \, \}}(\cA)$ be the corresponding cohomology class. For $[a], [b] \in H_{\{Q_{\ov\g}, \, \cdot \, \}}^{0, 0}(\cA),$ we define the derived bracket 

\begin{equation}\label{derived poisson bracket on H00}
\{[a], [b]\}_{\Pi} \coloneqq \{\{[\Pi], [a]\}, [b]\} = [\{\{ \Pi, a\}, b \}].
\end{equation}

We claim that this is a Poisson bracket on $H^{0, 0}_{\{Q_{\ov\g},\, \cdot \, \}}(\cA).$ Indeed, observe that the skew-symmetry and Leibniz rule for $\{\cdot, \cdot\}_{\Pi}$ are simple consequences of the same properties for the Poisson bracket on $\cA.$ As for the Jacobi identity, it is enough to notice that
\begin{equation}\label{00 component of (Pi, Pi)}
\{\Pi, \Pi\}^{0, 0} = \{\pi, \pi\} \in \cJ
\end{equation}
so that $\{[\Pi], [\Pi]\} = [\{\Pi, \Pi\}] = 0 \in H^{0, 3}_{\{Q_{\ov\g},\, \cdot \, \}} (\cA).$ 
It follows that $(H^{0, 0}_{\{Q_{\ov\g}, \cdot\}}(\cA), \{\cdot, \cdot\}_{\Pi})$ is a Poisson algebra. \par 

Similarly, the reducible weak Poisson structure $\pi \in \mathfrak{X}^2(M)$ induces a Poisson bracket on the degree zero functions in $N(\cJ)/\cJ,$ that is, for $\overline{f}, \overline{g} \in (N(\cJ)/\cJ)_0,$ 
\begin{equation}\label{derived bracket on deg function in the normalizer}
\{\overline{f}, \overline{g} \}_{\pi} \coloneqq \overline{ \{\{\pi, f\}, g \}}
\end{equation} 
is a Poisson bracket; and, for this, 
 we have %it is easy to see that 
\begin{equation}\label{isomorphism: reduced poisson algebra and normalizer with derived bracket}
\bigg( \bigg(\frac{N(\cJ)}{\cJ}\bigg)_0, \{\cdot, \cdot\}_{\pi} \bigg) \cong (C^{\infty}(C_{red}), \{\cdot, \cdot\}_{\pi_{red}}).
\end{equation}

As the next result shows, the BRST cohomology at total ghost number zero and function degree zero, with the derived bracket (\ref{derived poisson bracket on H00}), recovers the left-hand side of (\ref{isomorphism: reduced poisson algebra and normalizer with derived bracket}).

\begin{proposition}
Let $\pi \in \mathfrak{X}^2(M)$ be a reducible weak Poisson structure, and let $[\Pi] \in H^{0, 0}_{\{Q_{\ov\g}, \, \cdot \, \}}(\cA)$ be the cohomology class for which $\Pi^{0, 0} = \pi.$ If we endow $(N(\cJ)/\cJ)_0$ and $H^{0, 0}_{\{Q_{\ov\g}, \, \cdot \, \}}(\cA)$ with the  Poisson brackets (\ref{derived bracket on deg function in the normalizer}) and (\ref{derived poisson bracket on H00}), respectively, then the natural map
\begin{align}
\Phi : H^{0, 0}_{\{Q_{\ov\g}, \, \cdot \, \}}(\cA) & \longrightarrow \bigg(\frac{N(\cJ)}{\cJ}\bigg)_0  \label{isomorphim: H00 with derived bracket and N(j)/j} \\
[(a^{0,0}, a^{1,1}, \cdots)] & \longmapsto \overline{a^{0, 0}} \nonumber
\end{align}
is an isomorphism of Poisson algebras.
\end{proposition}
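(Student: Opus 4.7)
The plan is to leverage Theorem \ref{main result}, which already establishes that the natural map $\Phi: H^{0, \bullet}_{\{Q_{\ov\g}, \, \cdot\, \}}(\cA) \rightarrow N(\cJ)/\cJ$ is an isomorphism of degree $-1$ Poisson algebras. Since the Schouten-type brackets on both sides have function degree $-1,$ restriction of $\Phi$ to the function-degree-zero piece immediately yields a bijection of commutative algebras $H^{0,0}_{\{Q_{\ov\g}, \, \cdot\, \}}(\cA) \to (N(\cJ)/\cJ)_0.$ Hence the only remaining content of the proposition is to check that the derived Poisson brackets (\ref{derived poisson bracket on H00}) and (\ref{derived bracket on deg function in the normalizer}) correspond under $\Phi.$

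To establish this compatibility, the strategy is to exhibit each derived bracket as a twofold iteration of the underlying Schouten-type bracket intertwined by $\Phi,$ and then invoke Theorem \ref{main result} twice. Concretely, taking $[a], [b] \in H^{0, 0}_{\{Q_{\ov\g}, \, \cdot\, \}}(\cA)$ with representatives $a = \sum_k a^{k,k}$ and $b = \sum_k b^{k,k},$ and letting $[\Pi]$ denote the class with $\Pi^{0,0} = \pi$ furnished by Corollary \ref{cohomological condition for the reducibility of pi} (so that $\Phi([\Pi]) = \overline{\pi}$), one observes that $\{\Pi, a\}$ is a cocycle of total ghost zero and function degree one, so Theorem \ref{main result} applied to the Schouten bracket gives
\[
\Phi([\{\Pi, a\}]) \;=\; \{\Phi([\Pi]), \Phi([a])\} \;=\; \{\overline{\pi}, \overline{a^{0,0}}\} \;=\; \overline{\{\pi, a^{0,0}\}}.
\]
Applying the same principle a second time to the total-ghost-zero, function-degree-zero cocycle $\{\{\Pi, a\}, b\},$ one obtains
\[
\Phi(\{[a], [b]\}_{\Pi}) \;=\; \Phi([\{\{\Pi, a\}, b\}]) \;=\; \{\overline{\{\pi, a^{0,0}\}}, \overline{b^{0,0}}\} \;=\; \overline{\{\{\pi, a^{0,0}\}, b^{0,0}\}} \;=\; \{\Phi([a]), \Phi([b])\}_{\pi},
\]
which is exactly the desired compatibility of brackets.

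I do not expect a real obstacle in this proof, since the heavy lifting has already been absorbed into Theorem \ref{main result}. The one conceptual point worth emphasizing is that both derived brackets under comparison are obtained by the same algebraic procedure --- iterated Schouten bracket with a (weak) Poisson element --- applied to structures that $\Phi$ already intertwines; well-definedness of these brackets and their Jacobi identity were verified in the discussion preceding the proposition, so the twofold application of Theorem \ref{main result} above suffices to conclude.
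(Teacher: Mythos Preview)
Your proposal is correct and follows essentially the same approach as the paper: both reduce the statement to Theorem \ref{main result} for the algebra isomorphism and then verify compatibility with the derived brackets. The only stylistic difference is that the paper computes directly that $\{\{\Pi, a\}, b\}^{0,0} = \{\{\pi, a^{0,0}\}, b^{0,0}\}$ and reads off $\Phi(\{[a],[b]\}_\Pi)$ from the $(0,0)$-component, whereas you invoke the Poisson-morphism part of Theorem \ref{main result} twice; these are the same computation packaged differently.
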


\begin{proof}
From Theorem \ref{main result}, we already know that the map in (\ref{isomorphim: H00 with derived bracket and N(j)/j}) is an isomorphism of algebras. We need to prove that it is a Poisson morphism. For $[a], [b] \in H^{0, 0}_{\{Q_{\ov\g}, \, \cdot \, \}}(\cA),$ notice that  
$$ \{\{\Pi, a\}, b \}^{0, 0} = \{\{\pi, a^{0, 0}\}, b^{0, 0}\}.$$ 
Then, we have
\begin{align*}
\Phi(\{[a], [b]\}_{\Pi}) & = \Phi([ \{ \{\Pi, a\} , b\} ]) \\
& = \overline{\{\{\Pi, a\}, b\}^{0, 0}}\\
& = \overline{\{\pi, a^{0, 0}\}, b^{0, 0} \}}\\
& = \{\overline{a^{0, 0}}, \overline{b^{0, 0}}\}_{\pi}\\
& = \{\Phi([a]), \Phi([b])\}_{\pi}.
\end{align*}
\end{proof}

In view of (\ref{isomorphism: reduced poisson algebra and normalizer with derived bracket}), the last proposition implies that 
\begin{equation}\label{isomorphism: H00 and the reduced Poisson algebra}
(H^{0, 0}_{\{Q_{\ov\g,\,  \cdot \, }\}}(\cA), \{\cdot, \cdot\}_{\Pi}) \cong (C^{\infty}(C_{red}), \{\cdot, \cdot\}_{\pi_{red}}).
\end{equation}
We conclude that, given a reducible weak Poisson structure $\pi \in \mathfrak{X}^2(M)$, the degree one BFV construction recovers, through the derived bracket construction (\ref{derived poisson bracket on H00}), the Poisson algebra structure on functions on the reduced space $C_{red}.$ \par  

Furthermore,  
since $\{[\Pi], [\Pi]\} = [\{\Pi, \Pi\}] = 0 $,
%$\in H^{0, 3}_{\{Q_{\ov\g}, \, \cdot \, \}}(\cA),$ 
it follows that 
$$
\{[\Pi], \,  \cdot \, \} : H^{0, k}_{\{Q_{\ov\g}, \, \cdot \, \}}(\cA) \rightarrow H^{0, k+1}_{\{Q_{\ov\g}, \, \cdot \, \}}(\cA)
$$ 
is a differential, which shows that  $(H^{0, \bullet}_{\{Q_{\ov\g}, \, \cdot \, \}}(\cA),$ $ \{[\Pi], \, \cdot \, \})$ is a complex. 
The identifications (\ref{H0k brst cohomology is deg k multivector fields on the reduced space}) respect differentials and define an isomorphism of differential graded algebras
$$
(H^{0, \bullet}_{\{Q_{\ov\g}, \, \cdot \, \}}(\cA), \{[\Pi], \, \cdot \, \}) \cong (\mathfrak{X}^\bullet(C_{red}), \{\pi_{red}, \, \cdot \, \}).
$$
Therefore,  
the BRST cohomology $H_{\{Q_{\ov\g}, \cdot \}}^{0, \bullet}(\cA)$  also recovers the Poisson cohomology of the reduced bivector $\pi_{red} \in \mathfrak{X}^2(C_{red}).$
%%%%%%%%%%%
\subsection{Extended BRST charge}\label{subsection: extended brst charge} We now show how to combine the BRST charge $Q_{\ov\g}$, which encodes the reduction data, and the cocycle $\Pi$, which encodes a given reducible weak Poisson structure, into a single {\em extended BRST charge} 
$$
S^{(\infty)} = Q_{\ov\g} + \Pi + \cdots,
$$
which is a formal series of elements in $\cA$ with descending total ghost number. 

\begin{remark}
Our extended BRST charge is directly related to the construction appearing in \cite{Sharapov}.  There the authors start with a BRST charge encoding the reduction data and a reducible weak Poisson structure and, using the acyclicity of the Koszul complex, they extend the BRST charge to a formal power series in the ghost number. In \cite{AleZabzine}, a similar construction is considered, mainly for lifted hamiltonian actions, in the context of general BFV models and with a view towards applications to topological field theories. Here, however, we do something slightly different: we start with the BRST charge $Q_{\ov\g}$ and the cocycle $\Pi$ and use repeatedly the fact that the BRST cohomology vanishes at negative total ghost number (which follows from the acyclicity of the Koszul complex) to construct our extended BRST charge. Note that this may differ from the corresponding charges appearing in \cite{Sharapov} and \cite{AleZabzine} by exact terms.
\end{remark}
 
Notice that
$$
\{Q_{\ov\g} + \Pi, Q_{\ov\g} + \Pi\} = \{\Pi, \Pi\} \in \cA^0.
$$ 
Since $\{\Pi, \Pi\}^{0, 0} = [\pi, \pi] \in \cJ,$ and this implies that $\{\Pi, \Pi\}$ is $\{Q_{\g}, \, \cdot\, \}$-exact,  there exists $\Pi^{(-1)} \in \cA^{-1}$ of function degree two such that $\{Q_{\ov\g,}, \Pi^{(-1)}\} = - \frac{1}{2} \{\Pi, \Pi\}.$ Now let $\Pi^{(0)} \coloneqq \Pi$ and consider $$ S^{(1)} \coloneqq Q_{\ov\g} + \Pi^{(0)} + \Pi^{(-1)}.$$ Observe that $$ \{S^{(1)}, S^{(1)}\} = 2\{\Pi^{(0)}, \Pi^{(-1)}\} + \{\Pi^{(-1)}, \Pi^{(-1)}\} \in \cA^{-1} \oplus \cA^{-2}.$$
From the Jacobi identity, one concludes that $\{\Pi^{(0)}, \Pi^{(-1)}\} \in \cA^{-1}$ is $\{Q_{\ov\g},\, \cdot \, \}$-closed, then it defines a cohomology class in $H_{\{Q_{\ov\g},\, \cdot \, \}}^{-1, \bullet}(\cA);$ since  $H_{\{Q_{\ov\g},\, \cdot \, \}}^{-1, \bullet}(\cA) = 0,$ the cocycle $\{\Pi^{(0)}, \Pi^{(-1)}\}$ is  exact, so there exists $\Pi^{(-2)} \in \cA^{-2}$ for which $\{Q, \Pi^{(-2)}\} = - \{\Pi^{(0)}, \Pi^{(-1)}\}.$ 
Now, for 
$$
S^{(2)} \coloneqq Q_{\ov\g} + \Pi^{(0)} + \Pi^{(-1)} + \Pi^{(-2)}
$$ 
one can check that 
$$
\{S^{(2)}, S^{(2)}\} \in \cA^{-2} \oplus \cA^{-3} \oplus \cA^{-4},
$$ 
and the Jacobi identity implies  that the term in $\cA^{-2}$ is a $\{Q_{\ov\g}, \, \cdot\,  \}$-cocycle, so that  it defines a cohomology class in $H_{\{Q_{\ov\g}, \, \cdot \, \}}^{-2, \bullet}(\cA) = 0,$ implying that this $\cA^{-2}$-term is actually exact.

By induction, we conclude that there exists 
\begin{equation}\label{extended brst charge}
S^{(\infty)} = Q_{\ov\g} + \sum_{k \in \bN} \Pi^{(-k)}
\end{equation}
satisfying  \begin{equation}\label{master equation for the extended brst charge}
\{S^{(\infty)}, S^{(\infty)}\} = 0.
\end{equation}

%%%%%%%%%%%%%%%%%%%
\subsection{Homotopy structure}\label{subsection: homotopy structure} Let $\cN_{1, -1}$  be the $\bZ$-graded lagrangian submanifold of $\cN \coloneqq \cM \times T^*[1]\ov\g^*[-1]$  whose sheaf of functions is given by $$ C^{\infty}(\cN_{1, -1}) \coloneqq C^{\infty}(M) \otimes \bigwedge^\bullet \g^*  \otimes \bigwedge^\bullet \h.
$$
Denote by $\mathfrak{X}^\bullet(\cN_{1, -1})$ the multiderivations of the algebra $C^\infty(\cN_{1, -1})$, which are thought of as multivector fields on $\cN_{1, -1}.$ The grading in  $\mathfrak{X}^\bullet(\cN_{1, -1})$ is defined by letting
$$\mathfrak{X}^{n}(\cN_{1, -1}) \coloneqq \bigoplus_{p + q = n} \mathfrak{X}^{p, q}(\cN_{1, -1}),$$ 
where $\mathfrak{X}^{p, q}(\cN_{1, -1})$ denote  multivector fields of degree $p$ and polynomial degree $q$. %here polynomial degree refers to the degree of the multivector field as a multiderivation; this can be seen as polynomial degree when we take partial_x (dg 0) \g (degree -1) and h^* (dg 1) --- which are the degree of this maps as derivations of the algebra C^inft(N-1, 1).
The Gerstenhaber bracket  on $\mathfrak{X}^\bullet(\cN_{1, -1})$ is obtained as an extension of the graded commutator of derivations of the $\bZ$-graded algebra $C^{\infty}(\cN_{1, -1})$ (\cite[$\S$4.1]{Relative formality}).
%The Schouten bracket on $\mathfrak{X}^\bullet(\cN_{1, -1})$ is the Gerstenhaber bracket on the shifted Hochschild complex of the $\bZ$-graded algebra $C^{\infty}(\cN_{1, -1})$ (\cite[$\S$A.3]{Relative formality}).

Notice then that we have the following identifications
\begin{equation}\label{identification brst algebra and multivector fields}
 \cA^{k, \ell} \cong \mathfrak{X}^{\ell - k, \, k}(\cN_{1, -1}),
\end{equation}
recalling that the left-hand side denotes elements in $C^{\infty}(\cN)$ of total ghost number $k$ and function degree $\ell.$  From this, we see that the terms $Q_{\ov\g}$ and $\Pi^{(-k)},$ $k \in \bN,$ of the extended BRST charge (\ref{extended brst charge}) are such that  
$$Q_{\ov\g} \in \cA^{1 , 2} \cong \mathfrak{X}^{1, \,1}(\cN_{1, -1}), \, \, \, \Pi^{(-k)} \in \cA^{-k, \,2} \cong \mathfrak{X}^{k+2, -k}(\cN_{1, -1}),$$
so $S^{(\infty)}$ gives a formal bivector on $\cN_{1, -1}.$

The next result shows the relation between the two degree $-1$ Poisson algebras $(\cA, \{\cdot, \cdot\})$ and $(\mathfrak{X}^\bullet(\cN_{1, -1}), \{\cdot, \cdot \})$ -- it can be seen as a graded version of the Weinstein lagrangian neighborhood theorem.  

\begin{proposition}
The Poisson algebra $\cA \coloneqq C^{\infty}(\cN)$ is isomorphic to the Poisson algebra $\mathfrak{X}^\bullet(\cN_{1, -1})$ of multivector fields on the lagrangian submanifold $\cN_{1, -1} \subset \cN.$
\end{proposition}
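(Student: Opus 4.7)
The approach is to identify $\cN$ with the shifted cotangent bundle $T^*[1]\cN_{1,-1}$, so that tautologically $C^\infty(\cN) \cong C^\infty(T^*[1]\cN_{1,-1}) = \mathfrak{X}^\bullet(\cN_{1,-1})$ as degree $-1$ Poisson algebras. The statement then reads as a graded analog of the Weinstein lagrangian neighborhood theorem, and the explicit product structure of $\cN = \cM \times T^*[1]\ov\g^*[-1]$ will make this identification global, not merely tubular.

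Concretely, I would fix local coordinates $(x^i)$ on $M$ together with bases $\{u_j\}, \{v_\alpha\}$ of $\g, \h$ and dual bases $\{u^*_i\}, \{v^*_\alpha\}$. This produces graded local coordinates $(x^i, \xi_i, u^*_i, u^j, v^\alpha, v^*_\alpha)$ on $\cN$ of respective function degrees $(0, 1, 1, 0, -1, 2)$ with canonical pairings
\[
\{x^i, \xi_j\} = \delta^i_j, \quad \{u^*_i, u^j\} = \delta^j_i, \quad \{v^\alpha, v^*_\beta\} = \delta^\alpha_\beta,
\]
and all other brackets of generators vanishing. The subalgebra $C^\infty(\cN_{1,-1}) = C^\infty(M) \otimes \bigwedge^\bullet \g^* \otimes \bigwedge^\bullet \h$ is generated over $C^\infty(M)$ by the $u^*_i$ and $v^\alpha$, so $\cN_{1,-1} \hookrightarrow \cN$ is cut out by $\xi_i = u^j = v^*_\alpha = 0$. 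The surviving coordinates $(x^i, u^*_i, v^\alpha)$ and the vanishing ones $(\xi_i, u^j, v^*_\alpha)$ form the complete list of canonical conjugate pairs; this confirms at once that $\cN_{1,-1}$ is lagrangian and exhibits $\cN$ as a (shifted) vector bundle over $\cN_{1,-1}$ whose fiber coordinates are the ``momenta''.

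The key point is that these momenta match, in degree and in number, the canonical cotangent momenta on $T^*[1]\cN_{1,-1}$: the momentum of a base coordinate of degree $k$ in $T^*[1]N$ has degree $1 - k$, and the coordinates $(x^i, u^*_i, v^\alpha)$ of degrees $(0, 1, -1)$ thus have momenta of degrees $(1, 0, 2)$, exactly matching the degrees of $(\xi_i, u^j, v^*_\alpha)$. I would therefore define the isomorphism $\cN \xrightarrow{\sim} T^*[1]\cN_{1,-1}$ by the identity on the coordinates of $\cN_{1,-1}$ together with
\[
\xi_i \longleftrightarrow \partial_{x^i}, \quad u^j \longleftrightarrow \partial_{u^*_j}, \quad v^*_\alpha \longleftrightarrow \partial_{v^\alpha}.
\]
Under this identification, the canonical pairings recorded above become precisely the defining Schouten brackets $\{y, \partial_y\} = 1$ on $\mathfrak{X}^\bullet(\cN_{1,-1})$. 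Since both Poisson brackets are degree $-1$ biderivations that coincide on a generating set, they agree on the whole algebra, yielding the claimed Poisson algebra isomorphism.

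The only delicate step is the degree bookkeeping in the paragraph above: one must carefully verify that the shift $[1]$ applied to $\h$ in the definition of $\ov\g$ and the shift $[-1]$ applied to $\ov\g^*$ in $T^*[1]\ov\g^*[-1]$ propagate to the degrees expected of the shifted cotangent momenta of $\cN_{1,-1}$. Once this is checked on each generator, the remainder of the argument is routine. An abstract appeal to a graded Weinstein neighborhood theorem would achieve the same conclusion, but the global product structure of $\cN$ makes the explicit coordinate identification both more transparent and strictly global.
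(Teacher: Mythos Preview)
Your proposal is correct and follows essentially the same route as the paper: both proofs amount to exhibiting $\cN$ as $T^*[1]\cN_{1,-1}$ and then arguing that the two degree $-1$ brackets agree because they coincide on generators. The paper phrases the vector-space identification via the bigrading $\cA^{k,\ell}\cong\mathfrak{X}^{\ell-k,\,k}(\cN_{1,-1})$ already recorded just before the proposition, while you spell out the same identification in coordinates; your explicit degree check for the conjugate momenta is exactly the content behind that bigrading.
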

\begin{proof}
Using (\ref{identification brst algebra and multivector fields}), we can write, for $n \in \bZ,$ 
\begin{equation}\label{degreewise identification brst algebra multivectorfields}
 C_n^{\infty}(\cN) = \bigoplus_{k \in \bZ} \cA^{k, n} \cong  \bigoplus_{k \in \bZ} \mathfrak{X}^{n - k, k }(\cN_{1, -1}) = \mathfrak{X}^{n}(\cN_{1, -1}),
\end{equation}
so  $C^{\infty}(\cN) $ and $ \mathfrak{X}^\bullet(\cN_{1, -1}) $ are isomorphic as graded vector spaces. 
%To conclude that this identication is a Poisson algebra isomorphism, it is enough to notice that the brackets of ... and ... coincide on generators.
But this identification also preserves the associative products; 
%A simple computation shows that this identification preserves the associative products as well. 
then, since the brackets of $C^{\infty}(\cN)$ and $ \mathfrak{X}^\bullet(\cN_{1, -1})$ coincide on generators, we conclude that (\ref{degreewise identification brst algebra multivectorfields}) extends to an isomorphism of degree $-1$ Poisson algebras.
\end{proof}

This proposition implies that the formal bivector $S^{(\infty)}$ on $\cN_{1, -1}$ also satisfies the master equation, that is, $\{S^{(\infty)}, S^{(\infty)}\} = 0.$ Then, if we put
\begin{equation}\label{1-ary and 2-ary derived bracket}
    \ell_{1}(f_1) \coloneqq \{Q, f_{1}\} \, \, \, \, \text{and}  \, \, \, \ell_{2}(f_1, f_2) \coloneqq (-1)^{f_1} \{\{\Pi, f_1\}, f_2 \} 
\end{equation}
and, in general, for $k \geq 3,$ we let 
$$\ell_{k}: C^{\infty}(\cN_{1, -1})^{\otimes k} \rightarrow C^{\infty}(\cN_{1, -1}) $$
be defined by
\begin{equation}\label{general k-ary derived bracket}
    \ell_{k}(f_1, \dots, f_{k}) \coloneqq (-1)^{\epsilon} \{\dots \{\{ \Pi^{(2-k)}, f_1\}, f_2\}, \dots, f_{k}\}
\end{equation}
where $\epsilon \coloneqq \sum_{i = 1}^{k} (k-i)f_i,$ we conclude, by Voronov's construction of homotopy algebras (\cite{Voronov}), that the extended BRST charge $S^{(\infty)}$ induces a homotopy Poisson structure on $C^{\infty}(\cN_{1, -1}),$ which shows that  the lagrangian $\cN_{1, -1} \subset \cN$ is a homotopy Poisson manifold. 
Alternatively, one can use the relations given by the master equation $\{S^{(\infty)}, S^{(\infty)}\} = 0$ to directly derive the coherence laws satisfied by the brackets (\ref{1-ary and 2-ary derived bracket}) and (\ref{general k-ary derived bracket}). For instance, from $\{Q, \Pi^{(-1)}\} + \frac{1}{2}\{\Pi, \Pi\} = 0,$ we obtain 

\begin{equation}\label{Jacobi up to homotopy 2-ary bracket}
    \sum_{\text{cyclic}}(-1)^{fh}\ell_2(f, \ell_2(g, h)) = (-)^{fh} (\ell_3 \circ \ell_1^{\otimes_3} + \ell_1 \circ \ell_3)(f, g, h)
\end{equation}
%%%%%%%%%%%%%%%
where $\ell_1^{\otimes_3}: C^{\infty}(\cN_{1, -1})^{\otimes_3} \rightarrow C^{\infty}(\cN_{1, -1})^{\otimes_3}$ is the differential on $C^{\infty}(\cN_{1, -1})^{\otimes_3}$ given by $$\ell_1^{\otimes_3}(f\otimes g \otimes h) \coloneqq \ell_1 (f) \otimes g \otimes h + (-1)^f f \otimes \ell_1(g) \otimes h + (-1)^{f +g} f \otimes g \otimes \ell_1(h).$$ Observe that (\ref{Jacobi up to homotopy 2-ary bracket}) says %precisely 
that the $2$-ary bracket defined by the cocycle $\Pi$ satisfies the Jacobi identity up to homotopy, where such a homotopy is given in terms of the $3$-ary bracket defined by $\Pi^{(-1)}.$

We conclude that the derived bracket (\ref{derived poisson bracket on H00}) on $H_{\{Q_{\ov\g},\, \cdot \, \}}^{0, 0}(\cA)$ is, therefore, just part of the
homotopy Poisson structure on the lagrangian submanifold $\cN_{1, -1},$ where such a structure is, in turn, fully encoded in the extended BRST charge $S^{(\infty)}.$  

\begin{remark}\label{remark on Voronov's construction}
To introduce the homotopy Poisson structure on $C^{\infty}(\cN_{1, -1})$ it is not necessary to identify $\cA \coloneqq C^{\infty}(\cN)$ with $\mathfrak{X}^\bullet(\cN_{1, -1}).$ In order to apply Voronov's construction of homotopy algebras (\cite{Voronov}),
it is enough to consider the abelian subalgebra $C^{\infty}(\cN_{1, -1}) \subset \cA$ and then use the extended BRST charge to define the higher derived brackets, since we already know that $\{S^{(\infty)}, S^{(\infty)}\} = 0$ on $\cA$ and this  suffices to directly compute the coherence laws for the derived brackets. Our motivation  to consider such an identification is to relate our construction with what was done in \cite{Sharapov} and to  connect it with the recent literature on homotopy Poisson manifolds and shifted Poisson structures (e.g. \cite{Behrend, Bandiera, Safronov, Pridham, Mehta}). From the viewpoint of \cite{Bandiera}, for instance, the structure we get on $C^\infty(\cN_{1, -1})$ is that of a degree 0 derived Poisson algebra (i.e., a $P_{\infty}$-algebra in the sense of Cattaneo-Felder  (\cite{Relative formality}) or a $\widehat{\mathbb{P}}_1$-algebra for Safronov (\cite{Safronov})).\par 
\end{remark}

When  $\pi \in \mathfrak{X}^2(M)$ is an actual Poisson structure, not just weak, we obtain the following  homological version of the generalized Marsden-Weinstein theorem by Cattaneo-Zambon (Theorem  \ref{generalized context - Marsden Weinstein thm}).

\begin{theorem}\label{homotopy version of the Kostant Sternberg result}
    Let $\psi: \g \rightarrow \mathfrak{X}(M)$ be a Lie algebra action on a Poisson manifold $(M, \pi),$ and let $J: M \rightarrow \h^*$ be a $\g$-equivariant map, for $\h$  a $\g$-module. Assume that the reduction data $(\psi, J)$ is regular and compatible with the Poisson structure $\pi.$  Then the $\bZ$-graded algebra $$\mathcal{K}^\bullet_{\g, \h} \coloneqq C^{\infty}(M) \otimes \bigwedge^\bullet \g^* \otimes \bigwedge^\bullet \h ,$$ graded by total ghost number, admits a homotopy Poisson structure with differential $\partial: \mathcal{K}^\bullet_{\g, \h} \rightarrow \mathcal{K}^{\bullet + 1}_{\g, \h}$ and a sequence of $k$-ary brackets $\{\cdot, \dots, \cdot\}_k,$ $k \geq 2,$ such that the Poisson algebra $(H_{\partial}^0(\mathcal{K^\bullet_{\g, \h}}), \{\cdot, \cdot\}_2)$ is identified with the reduced Poisson algebra $(C^{\infty}( C_{red}), \{\cdot, \cdot\}_{\pi_{red}}).$
\end{theorem}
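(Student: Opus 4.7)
The strategy is to assemble the constructions of the preceding sections into a single statement. First, by the hamiltonian correspondence of $\S$\ref{subsetion: moment map in degree one}, the classical data $(\psi, J)$ determines a moment map $J^\sharp: \ov\g[-1] \rightarrow C^\infty(\cM)$ for a graded action of $\ov\g = \h[1] \oplus \g$ on $\cM \coloneqq T^*[1]M$, and the regularity hypothesis makes Theorem \ref{main result} applicable to the BFV model $(\cA, \{Q_{\ov\g}, \cdot\})$ of $\S$\ref{section: brst in degree one}. Compatibility of $(\psi, J)$ with $\pi$ forces $\pi \in N(\cJ)$, so Corollary \ref{cohomological condition for the reducibility of pi} produces a cocycle $\Pi \in \cA^{0,2}$ with $\Pi^{0,0} = \pi$. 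I would then invoke the inductive procedure of $\S$\ref{subsection: extended brst charge}, whose existence rests on the vanishing of $H^{-k,\bullet}_{\{Q_{\ov\g}, \cdot\}}(\cA)$ for $k \geq 1$ (a consequence of the Koszul acyclicity in Proposition \ref{acyclicity of koszul complex in degree one}), to extend $Q_{\ov\g} + \Pi$ to an extended BRST charge $S^{(\infty)} = Q_{\ov\g} + \sum_{k \geq 0} \Pi^{(-k)}$ satisfying the master equation $\{S^{(\infty)}, S^{(\infty)}\} = 0$.

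Next, I would observe that $\mathcal{K}^\bullet_{\g,\h}$ agrees with $C^\infty(\cN_{1,-1})$ as $\bZ$-graded algebras, realizing it as an abelian (lagrangian) subalgebra of the degree $-1$ Poisson algebra $\cA$. Following $\S$\ref{subsection: homotopy structure}, I would define $\partial \coloneqq \ell_1 = \{Q_{\ov\g}, \cdot\}$ on $\mathcal{K}^\bullet_{\g,\h}$ and, for $k \geq 2$, set $\{\cdot, \ldots, \cdot\}_k \coloneqq \ell_k$ by the derived formulas (\ref{general k-ary derived bracket}) built from $\Pi^{(2-k)}$. Voronov's theorem on higher derived brackets then translates $\{S^{(\infty)}, S^{(\infty)}\} = 0$ directly into the coherence relations defining a homotopy Poisson algebra structure on $\mathcal{K}^\bullet_{\g,\h}$; in particular $\partial^2 = 0$, and the 2-bracket satisfies Jacobi up to a homotopy controlled by the 3-bracket.

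To conclude, I would identify the Poisson algebra in degree zero cohomology. Since $\partial$ on $\mathcal{K}^\bullet_{\g,\h}$ equals the restriction of $\{Q_{\ov\g}, \cdot\}$ to the function-degree-zero slice of $\cA$, one has $H^0_\partial(\mathcal{K}^\bullet_{\g,\h}) = H^{0,0}_{\{Q_{\ov\g}, \cdot\}}(\cA)$ as commutative algebras. By construction, the induced 2-ary bracket on this cohomology coincides with the derived bracket $\{\cdot, \cdot\}_\Pi$ of $\S$\ref{subsection: derived bracket}, so the proposition there, combined with the identification (\ref{isomorphism: reduced poisson algebra and normalizer with derived bracket}), delivers the Poisson isomorphism with $(C^\infty(C_{red}), \{\cdot, \cdot\}_{\pi_{red}})$.

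The main subtle point I expect is ensuring that the Voronov brackets actually take values in the abelian subalgebra $\mathcal{K}^\bullet_{\g,\h}$ rather than merely in $\cA$; this is precisely why one works with the lagrangian submanifold $\cN_{1,-1}$, for it is the abelian (lagrangian) nature of this subalgebra that makes the iterated brackets close up without any projection, as pointed out in Remark \ref{remark on Voronov's construction}. The hypothesis that $\pi$ is a genuine Poisson structure (rather than weak) enters only at the final identification, where $\{\pi, \pi\} = 0$ combined with Definition \ref{compatibility of the Poisson struct with the reduction data (psi, J)} guarantees that the induced bracket on $C_{red}$ is a genuine Poisson bracket.
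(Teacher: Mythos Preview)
Your proposal is correct and follows essentially the same route as the paper: assemble the degree one BFV model from the reduction data, use regularity and compatibility to build the extended BRST charge $S^{(\infty)}$, identify $\mathcal{K}^\bullet_{\g,\h}$ with $C^\infty(\cN_{1,-1})$, apply Voronov's higher derived bracket construction, and conclude via the identification of $H^{0,0}_{\{Q_{\ov\g},\cdot\}}(\cA)$ with the reduced Poisson algebra established in $\S$\ref{subsection: derived bracket}. Your account is in fact more detailed than the paper's own summary paragraph, and the subtle point you flag about the lagrangian nature of $\cN_{1,-1}$ is exactly what the paper addresses in Remark~\ref{remark on Voronov's construction}.
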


This result follows from applying the degree one BFV construction to the moment map $J^{\sharp}: \ov\g[-1] \rightarrow C^{\infty}(\cM)$ defined by the generalized reduction data $(\psi, J)$. The regularity of the pair $(\psi, J)$ and its compatibility with the Poisson structure $\pi$ assures the existence of an extended BRST charge $S^{(\infty)}$. The algebra $\mathcal{K}^\bullet_{\g, \h}$ is then taken to be the abelian algebra $C^{\infty}(\cN_{1, -1}),$ whereas the differential and the higher brackets are those defined in terms of the extended BRST charge $S^{(\infty)}$. Finally, since $S^{(\infty)}$ satisfies the master equation $\{S^{(\infty)}, S^{(\infty)}\} = 0,$ it follows that these brackets turn $\mathcal{K}^\bullet_{\g, \h} \coloneqq C^{\infty}(\cN_{1, -1})$ into a homotopy Poisson algebra.
%%%%%%%%%%%%%%%%%%%%%%%%
\subsection{The case of classical reduction data}\label{the case of the classical reduction data}
In the following, we show that Theorem \ref{homotopy version of the Kostant Sternberg result} applied to the classical hamiltonian reduction of Poisson manifolds recovers  the homological model obtained by Kostant-Sternberg (\cite{Kostant}), which we recalled in $\S$\ref{section brst in degree zero}.\par 

Let $(M, \pi)$ be a Poisson manifold endowed with a hamiltonian $\g$-action $\psi: \g \rightarrow \mathfrak{X}(M)$ with moment map $J: M \rightarrow \g^*.$ These data correspond to the hamiltonian action of  the graded Lie algebra $\ov\g = \g[1] \oplus \g$ on $\cM \coloneqq T^{*}[1]M$ with moment map $J^{\sharp}: \ov\g[-1] \rightarrow C^{\infty} (\cM)$ given by $J^{\sharp}_{1}(u) = u_M \in \mathfrak{X}(M),$ $u \in \g,$ and $J^{\sharp}_0(v) = J^*(v), v \in \g.$\par 
In this case, the geometric data encoded by the constraint submanifold $\cC = (J, J^\sharp)^{-1}(0)$ is the level set $C = J^{-1}(0)$ endowed with the tangent distribution $\cD = \langle J_1^\sharp(u) \rangle_{u \in \g} = \langle u_M \rangle_{u \in \g}.$
Under the hypothesis of the classical Marsden-Weinstein theorem, the degree one reduced space $\cC_{red}$ corresponding to the submanifold $\cC = (J, J^\sharp)^{-1}(0) \subset \cM$ exists and it is given by $\cC_{red} = T^*[1](J^{-1}(0)/G)$, for a $G$-action on $J^{-1}(0)$ integrating the infinitesimal $\g$-action, see Theorem \ref{marsden-weinstein cattaneozambon}.\par

Now, take $\{u^i\}$ a basis for the Lie algebra $\g.$ Let $\{u^{i, (0)}\}$ and $\{u^{j, (-1)}\}$ denote copies of this basis in the degree  0 and the degree $-1$ part of $\ov\g = \g[1] \oplus \g,$ respectively, and let $\{u_i^{(0)}\}$ and $\{u_j^{(-1)}\}$ denote the corresponding dual bases. With this notation, the BRST charge (\ref{BRST charge for degree one hamiltonian reduction}) reads 
\begin{equation}\label{deg one brst charge obtained from the classical reduction data}
Q_{\ov\g}  = J^{\sharp}_1(u^{i, (0)}) u^{(0)}_i + J^{\sharp}_0(u^{j, (-1)})  u_j^{(-1)} - \frac{1}{2} c_k^{ij} \, u^{(0)}_i \, u^{(0)}_j \, u^{k, (0)} - c^{mn}_{p} u^{(0)}_{m} \, u^{(-1)}_{n} \, u^{p, (-1)}.
\end{equation}

Since $\ov\g = \g[1] \oplus \g$ is a degree one Lie algebra, it follows that $\{Q_{\ov\g}, Q_{\ov\g}\} = 0;~$   hence, we may consider the differential graded algebra $(\cA, \{Q_{\ov\g}, \, \cdot \, \})$ (here, as in the general case, $\cA \coloneqq C^{\infty}(\cN),$ where $\cN \coloneqq T^*[1](M \times \ov\g^*[-1])).$ \par  
Therefore, from Theorem \ref{main result}, we conclude that $$ H^{0, \bullet}_{\{Q_{\ov\g},\, \cdot \, \}}(\cA) \cong \mathfrak{X}^\bullet(\cC_{red}),$$ that is, the degree zero cohomology of the complex $(\cA, \{Q_{\ov\g}, \, \cdot \, \})$ is isomorphic to the degree $-1$ Poisson algebra of multivector fields on the reduced space $J^{-1}(0)/G.$ \par 
 
The Poisson structure $\pi \in \mathfrak{X}^2(M)$ is clearly compatible with the pair $(\psi, J),$  
so Corollary \ref{cohomological condition for the reducibility of pi} guarantees that there exists a cohomology class $[\Pi] \in H_{\{Q_{\ov\g}, \, \cdot \, \}}^{0, 2}$ for which $\Pi^{0, 0} = \pi.$ Using that the degree one moment map $(J, J^\sharp)$ was obtained from the moment map $J: M \rightarrow \g^*,$ we compute
\begin{align*}
\{Q_{\ov\g}, \pi \} &  = \{J_1^\sharp(u^{i, (0)}), \pi \}u^{(0)}_i + \{J_0^\sharp(u^{j, (-1)}), \pi \}u_j^{(-1)}\\
& = \{u^i_M , \pi \}u_i^{(0)} + \{J^*u^j, \pi \}u_j^{(-1)}\\
& = \{ \pi, \{\pi, J^*u^i \}\}u_i^{(0)} + \{\pi, J^* u^j\}u_j^{(-1)}\\
& = - u^j_M u^{(-1)}_j.
% because \{\pi, J^*u^j\} = - X_{J^*u_j}; recall that [\pi, f] = - \pi^\sharp(df) (see Dufour Thm 1.8.5)
\end{align*}
It follows that
\begin{equation}\label{explicit representative for the cohomology class related to the Poisson structure - classical case}
\Pi \coloneqq \pi + u^{i, (0)} u^{(-1)}_i
\end{equation}
satisfies $\{Q_{\ov\g}, \Pi\} = 0,$ so it  can be taken as an explicit representative for the cohomology class encoding the reducible Poisson structure $\pi \in \mathfrak{X}^2(M).$\par  

Now, notice that, for $Q_{\ov\g}$ is as in (\ref{deg one brst charge obtained from the classical reduction data}) and $\Pi$ is as in (\ref{explicit representative for the cohomology class related to the Poisson structure - classical case}),  we have $$\{Q_{\ov\g} + \Pi, Q_{\ov\g} + \Pi \} = 0.$$  
So, in this case, the extended BRST charge is given as a sum of only two terms, 
\begin{equation}\label{extended brst charge for lifted hamiltonian action}
S^{(\infty)} = Q_{\ov\g}+ \Pi.
\end{equation}
 
Hence, in the homotopy Poisson structure induced by (\ref{extended brst charge for lifted hamiltonian action}) on $C^{\infty}(\cN_{1, -1})$ only the first two brackets are  nonzero, which means that $C^{\infty}(\cN_{1, -1})$ is simply a differential Poisson algebra. Using (\ref{1-ary and 2-ary derived bracket}), one can directly verify that this structure coincides with the one %Kostant-Sternberg BFV algebra 
described in $\S$\ref{section brst in degree zero}.
%%%%%%%%%%

\subsection{Hamiltonian actions of dgla's} %The Kostant-Sternberg BFV algebra is a particular instance of a more general construction. 
What we have just described turns out to be a particular instance of a more general construction. Indeed, let $(\ov\g \coloneqq \h[1] \oplus \g, \delta)$ be a differential graded Lie algebra (dgla) concentrated in degrees $-1$ and $0.$ Then, for $(M, \pi)$ a Poisson manifold, let $\Psi: (\ov\g, \delta) \rightarrow (\mathfrak{X}(\cM), [X_{\pi}, \cdot \, ])$ be a morphism of dgla's, where $X_{\pi} \coloneqq \{\pi, \, \cdot \, \},$ and let $J^{\sharp}: \ov\g[-1] \rightarrow C^\infty(\cM)$ be a moment map for this $\ov\g$-action.

In this case, we have
\begin{align}
    \{\pi, J^\sharp_1(u)\} & = 0,  \hspace{0.2cm} u \in \g, \label{eq: poisson vfs dgla case}\\
    \{\pi, J^\sharp_0(v)\} & = J^\sharp_1(\delta(v)), \hspace{0.2cm} v \in \h.\label{eq: hamiltonian vfs dgla case}
\end{align}
So $\pi \in N(\cJ).$ For the BRST charge $Q_{\ov\g}$ as in (\ref{BRST charge for degree one hamiltonian reduction}), using \eqref{eq: poisson vfs dgla case} and \eqref{eq: hamiltonian vfs dgla case}, we obtain 
\begin{equation}\label{eq: Q pi dgla case} 
        \{Q_{\ov\g}, \pi\}  = \{ J^\sharp_0(v^j)v^*_j, \pi\}  = \{\pi, J^\sharp_0(v^j)\}v^*_j = J^\sharp_1(\delta(v^j))v^*_j.
\end{equation}

Now, let 
\begin{equation}\label{eq: Pi cocycle dgla case}
    \Pi \coloneqq \pi - a^i_j v^*_i u^j,
\end{equation}
where $ A\coloneqq (a^i_j)$
is the matrix of $\delta: \h \rightarrow \g$ in the bases $\{u^i\}$ of $\g$ and $\{v^j\}$ of $\h$. Then, from \eqref{eq: Q pi dgla case} and the fact that $\delta: \h \rightarrow \g$ is derivation, it follows that 
$$ 
S^{(\infty)} = Q_{\ov\g} + \Pi
$$ 
satisfies $\{S^{(\infty)}, S^{(\infty)}\} = 0.$
This extended BRST charge induces the structure of differential graded Poisson algebra on $$\mathcal{K}_{\g, \h} = C^{\infty}(M) \otimes \bigwedge^\bullet \g^* \otimes \bigwedge^\bullet \h.$$\par 
 It is clear that for the graded algebra $\ov\g \coloneqq \g[1]\oplus \g$ with differential $\delta \coloneqq -id: \g \rightarrow \g,$ the above construction recovers the BFV algebra of the usual hamiltonian case  (c.f. $\S$\ref{the case of the classical reduction data}).

%%%%%%%%%%%%%%%%%%%%%%%%%%
\subsection{Lie bialgebra and quasi-bialgebra actions}
We will now look at the BFV model in the context of Lie bialgebra and quasi-bialgebra actions.
%We will now apply the general constructions from $\S$\ref{subsection: derived bracket}, $\S$\ref{subsection: extended brst charge} and $\S$\ref{subsection: homotopy structure} to the context of Lie bialgebra and quasi-bialgebra actions. 
For the former, we are still in the setup of Theorem \ref{homotopy version of the Kostant Sternberg result}, whereas, for the latter, we go back to the more general case of weak Poisson structures ($\S$\ref{subsection: reducible Poisson str}).

Recall that a Lie bialgebra is a Lie algebra $\g$ endowed with a $1$-cocycle $F: \g \rightarrow \bigwedge^2 \g$ for the adjoint representation of $\g$ on $\bigwedge^2 \g$ whose dual defines a Lie algebra structure on $\g^*.$

An action of the Lie bialgebra  $(\g, F)$ on a Poisson manifold $(M, \pi)$ is a $\g$-action $ \psi: \g \rightarrow \mathfrak{X}(M)$ for which 
\begin{equation}\label{lack of inv of pi - bialgebra}
    \{\pi, u^k_M\} = (F(u^k))_M = a^k_{ij} u^i_M u^j_M,
\end{equation}
 where we let $\psi(u^i) \coloneqq u^i_M,$ as usual, and $\{a^k_{ij}\}$ are the structural constants of the dual bracket with respect to the dual basis of $\{u^i\}.$ Notice that equation (\ref{lack of inv of pi - bialgebra}) says that the Poisson structure $\pi$ is not necessarily invariant with respect to the $\g$-action, but the lack of invariance is controlled by the cobracket; in graded terms, if we let  $\cJ$ denote the homogeneous ideal generated by $\{u^i_M\}$,  this condition ensures that $\pi \in N(\cJ)$. From the perspective of Definition \ref{compatibility of the Poisson struct with the reduction data (psi, J)}, we have a pair $(\psi, J)$ compatible with the Poisson structure $\pi \in \mathfrak{X}^2(M),$ where $J \equiv 0$ $(\h = \{0\}).$

 Since we have only the degree one constraints, which are given by the infinitesimal $\g$-action $\psi: \g \rightarrow \mathfrak{X}(M),$ the BRST charge can be written as
\begin{equation}\label{BRST charge - bialgebra}
    Q_{\g} = u^i_M u^*_i - \frac{1}{2} c^{ij}_k u^*_i u^*_j u^k.\\
\end{equation}

Since $\pi \in N(\cJ)$, it follows from Corollary \ref{cohomological condition for the reducibility of pi} that there exists a cohomology class $[\Pi] \in H^{0, 2}_{\{Q_{\g}, \cdot \}}(\cA)$ associated to the Poisson structure $\pi \in \mathfrak{X}^2(M).$ In this case, we can also write down such a cocycle explicitly. \par
For this, recall that, if $[\cdot, \cdot]$ denotes the Lie bracket on $\g$ and $[\cdot, \cdot]^*$ denotes the Lie bracket on $\g^*,$ 
%induced by the 1-cocyle $F: \g \rightarrow \bigwedge^2 \g,$ 
then, for $u, v\in \g$ and $\xi, \eta \in \g^*, $ we have (\cite[Prop.~10.11]{Vaisman})
\begin{equation}\label{bracket cobracket equation}
    [\xi, \eta]^*([u, v]) = -[ad^*_u \xi, \eta]^*(v) - [\xi, ad^*_u\eta]^*(v) + [ad^*_v \xi, \eta]^*(u) + [\xi, ad^*_v \eta]^*(u).
\end{equation}
Then, from (\ref{bracket cobracket equation}), we obtain, for fixed basis elements $u_i, u_j \in \g$ and $u^*_m, u^*_n \in \g^*,$ the following equation in terms of the structural constants
\begin{equation}\label{bracket-dual bracket relation: structural constants}
    a_{ij}^{\ell}c_{\ell}^{mn} = - a_{\ell j}^n c^{\ell m}_{i} - a_{i\ell}^n c^{\ell m}_{j} + a_{\ell j}^m c^{\ell n}_{i} + a_{i\ell}^m c^{\ell n}_{j}.
\end{equation}
Using (\ref{lack of inv of pi - bialgebra}) and (\ref{bracket-dual bracket relation: structural constants}), a direct computation shows that
\begin{equation}\label{Poisson cocycle - bialgebra}
    \Pi  = \pi + a^j_{ik} u^i_M u^*_j u^k
\end{equation}
is a $\{Q_{\g}, \cdot\}$-cocyle, $\{Q_\g, \Pi\} = 0.$ One can see that $\{\Pi, \Pi\}$ is not necessarily zero, so the extended BRST charge  will contain higher terms, 
\begin{equation}\label{eq: extended charge bialgebra case}
    S^{(\infty)} \coloneqq Q_{\g} + \Pi + \sum_{k \geq 1} \Pi^{(-k)},
\end{equation} 
for $Q_{\g}$ and $\Pi$ as in (\ref{BRST charge - bialgebra}) and (\ref{Poisson cocycle - bialgebra}), respectively (c.f. \cite[Prop. 21]{AleZabzine}). From the construction in $\S$\ref{subsection: homotopy structure}, we obtain the following result.

\begin{proposition}\label{proposition: homotopy strc bialgebra}
    Let $(M, \pi)$ be a Poisson manifold, and let $\psi: \g \rightarrow \mathfrak{X}(M)$ be an action of a Lie bialgebra $(\g, F).$ Then extended charge (\ref{eq: extended charge bialgebra case}) induces the structure of homotopy Poisson algebra on the graded algebra
$$\mathcal{K_{\g}} \coloneqq C^{\infty}(M) \otimes \bigwedge^\bullet \g^*.$$ In particular, the corresponding $2$-ary bracket is given by \begin{equation}\label{f, g bracket - bialgebra}
    \ell_2(f, g) = \{ \{\pi, f \}, g \}, \hspace{0.2cm} f, g \in C^{\infty}(M),
\end{equation}
\begin{equation}\label{f, u* bracket - bialgebra }
\ell_2(f, u^*) = \{u_M^i, f\}[u^*_i, u^*]^*, \hspace{0.2cm}  f \in C^{\infty}(M), \, \, u^* \in \g^*,
\end{equation} 
\begin{equation}\label{u* u* bracket - bialgebra}
    \ell_2(u_1^*, u_2^*) = 0, \hspace{0.2cm} u_1^*, u_2^* \in \g^*.
\end{equation}
%$$\{f, u_{\ell}^* \} = \{u_M^i, f\}[u^*_i, u^*_{\ell}]^*$$
\end{proposition}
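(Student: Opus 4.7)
The plan is to derive the statement as a direct application of the general machinery developed in $\S$\ref{subsection: extended brst charge} and $\S$\ref{subsection: homotopy structure}, and then to extract the explicit $\ell_2$ formulas by computing the relevant derived brackets for the specific cocycle at hand.

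First I would invoke the existence result for the extended BRST charge in the present setting, where $\h = \{0\}$ so the reduction data amounts to the $\g$-action $\psi$ alone and the BRST charge is (\ref{BRST charge - bialgebra}). The bialgebra compatibility (\ref{lack of inv of pi - bialgebra}) directly gives $\pi \in N(\cJ),$ so Corollary \ref{cohomological condition for the reducibility of pi} applies. The explicit cocycle $\Pi = \pi + a^j_{ik}u^i_M u^*_j u^k$ of (\ref{Poisson cocycle - bialgebra}) is already verified to satisfy $\{Q_\g, \Pi\} = 0$ via the Lie bialgebra identity (\ref{bracket-dual bracket relation: structural constants}). Since the Koszul complex is acyclic (Proposition \ref{acyclicity of koszul complex in degree one}), the inductive construction of $\S$\ref{subsection: extended brst charge} then produces an extended BRST charge $S^{(\infty)}$ satisfying the master equation $\{S^{(\infty)}, S^{(\infty)}\} = 0,$ and Voronov's construction as reviewed in $\S$\ref{subsection: homotopy structure} endows $\mathcal{K}_\g = C^{\infty}(\cN_{1, -1})$ with the required homotopy Poisson structure.

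The remaining task is to verify the explicit formulas for $\ell_2(f_1, f_2) = (-1)^{f_1}\{\{\Pi, f_1\}, f_2\}.$ For $f, g \in C^{\infty}(M),$ I would compute $\{\Pi, f\} = \{\pi, f\} + a^j_{ik}\{u^i_M, f\}u^*_j u^k;$ since the second summand is a function on $M$ times the ghost monomial $u^*_j u^k$ and $g$ is ghost-free, its bracket with $g$ vanishes, leaving (\ref{f, g bracket - bialgebra}). For $f \in C^{\infty}(M)$ and $u^* = u^*_\ell \in \g^*,$ bracketing against $u^*_\ell$ pairs the antighost factor $u^k$ in the second summand with $u^*_\ell$ via the canonical pairing $\{u^*_\ell, u^k\} = \delta^k_\ell,$ producing a coefficient proportional to $a^j_{i\ell}u^i_M(f)$ on $u^*_j;$ identifying the matrix $(a^j_{i\ell})$ with the structural constants of $[\cdot, \cdot]^*$ then gives (\ref{f, u* bracket - bialgebra }). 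Finally, for $u_1^*, u_2^* \in \g^*,$ the cocycle $\Pi$ contains only a single antighost factor, so no pairing can consume both $u_1^*$ and $u_2^*,$ yielding (\ref{u* u* bracket - bialgebra}).

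The principal obstacle will be careful sign tracking in the degree $-1$ Poisson bracket: several Koszul signs appear in the Leibniz expansion of $\{\Pi, f_1\}$ and in the subsequent pairing with $f_2,$ and one must confirm that after all simplifications the coefficient in (\ref{f, u* bracket - bialgebra }) really is $[u^*_i, u^*]^*.$ This identification is essentially built into the setup, however, since the scalars $a^j_{ik}$ prescribing the non-invariance of $\pi$ in (\ref{lack of inv of pi - bialgebra}) are by definition the structural constants of the cobracket $F: \g \rightarrow \bigwedge^2\g,$ whose dual is precisely $[\cdot, \cdot]^*$ on $\g^*.$
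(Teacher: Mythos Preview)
Your proposal is correct and follows the same route as the paper: the proposition is stated there as a direct consequence of the general construction in \S\ref{subsection: extended brst charge}--\S\ref{subsection: homotopy structure}, with the cocycle $\Pi$ of (\ref{Poisson cocycle - bialgebra}) already verified beforehand, and no further argument is given. Your explicit derived-bracket computations for (\ref{f, g bracket - bialgebra})--(\ref{u* u* bracket - bialgebra}) simply supply details that the paper leaves to the reader.
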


%(Observe that) 
From the master equation satisfied by the extended BRST charge $S^{(\infty)},$ we obtain, in particular, the relation
\begin{equation}\label{homotopy relation bialgebra case}
    \{Q, \Pi^{(-1)}\} + \frac{1}{2}\{\Pi, \Pi\} = 0,
\end{equation}
 which says that $\Pi^{-1}$ is the term generating the homotopy for the Jacobiator of the brackets defined in (\ref{f, g bracket - bialgebra}), (\ref{f, u* bracket - bialgebra }) and (\ref{u* u* bracket - bialgebra}). In the present case, since $\pi$ is strictly Poisson, we have   $\Pi^{-1} \in A^{1,2}\oplus A^{2,3}$: the $A^{1, 2}$-term generates the homotopy for the Jacobiator of $f, g \in C^{\infty}(M)$ and $u^* \in \g^*,$ whereas the $A^{2, 3}$-term gives the homotopy for the Jacobiator of $f \in C^{\infty}(M)$ and $u^*_1, u^*_2 \in \g^*.$

This discussion extends to the case of actions of quasi-bialgebras.
Recall that a Lie quasi-bialgebra is a triple $(\g, F, \chi),$ where $\g$ is a Lie algebra, $F: \g \rightarrow \bigwedge^2 \g$ is a cocycle for the adjoint representation of $\g$ on $\bigwedge^2 \g,$ and $\chi \in \bigwedge^3 \g,$ for which the brackets
\begin{equation*}
    [(u, 0), (v, 0)] = ([u, v], 0), \hspace{0.2cm} u, v \in \g,
\end{equation*}
%%%
\begin{equation*}
    [(u, 0), (0, v^*)] = (\iota_{v^*} F(u), \text{ad}^*_{u}v^*), \hspace{0.2cm} u \in \g, \, \, v^* \in \g^*,
\end{equation*}
%%%
 \begin{equation*}
    [(0, u^*), (0, v^*)] = (\chi(u^*, v^*), F^*(u^*, v^*) ), \hspace{0.2cm} u^*, v^* \in \g^*,
\end{equation*}
define a Lie algebra structure on $\g \oplus \g^*.$ \par

Given a quasi-Lie bialgebra $(\g, F, \chi),$ a {\em quasi-Poisson $\g$-space} is a manifold $M$ endowed an $\g$-action $\psi: \g \rightarrow \mathfrak{X}(M)$ and a bivector field $\pi \in \mathfrak{X}^2(M)$ satisfying  

\begin{equation}\label{quasibialgebra invariance of pi}
    \{\pi, u^k_M\} = (F(u^k))_M = a^k_{ij} u^i_M u^j_M,
\end{equation}
\begin{equation}\label{non integrability - quasi bialgebra}
    \frac{1}{2}\{\pi, \pi\} = \chi_M.
\end{equation}

If $\cJ$ denotes the homogeneous ideal spanned by the degree one constraints associated to the infinitesimal action $\psi: \g \rightarrow \mathfrak{X}(M),$ we see that the condition $\pi \in N(\cJ)$ follows from \eqref{quasibialgebra invariance of pi}, whereas (\ref{non integrability - quasi bialgebra}) says that $\pi \in \mathfrak{X}^2(M)$ is a weak Poisson structure.\par 
In this setting, the first two terms of the corresponding extended BRST charge are also given by (\ref{BRST charge - bialgebra}) and (\ref{Poisson cocycle - bialgebra}). The additional piece of data $\chi \in \bigwedge^3 \g$ enters as the $A^{0, 1}$-term of $\Pi^{-1}$ that did not appear in the bialgebra case, that is, we will have 

\begin{equation}\label{term of tgh -1 - quasi bialgebra}
     \Pi^{(-1)} = \frac{1}{3}\psi(\iota_{u^*_i} \chi)u^i + \cdots. 
\end{equation}

As before, the extended BRST charge 
\begin{equation}
    S^{(\infty)} \coloneqq Q_{\g} + \Pi + \sum_{k \geq 1} \Pi^{(-k)},
\end{equation} 
induces the structure of homotopy Poisson algebra on the graded algebra 
$$\mathcal{K_{\g}} \coloneqq C^{\infty}(M) \otimes \bigwedge^\bullet \g^*.$$ Since $\Pi$ remains the same, the $2$-ary bracktes are still given by (\ref{f, g bracket - bialgebra}), (\ref{f, u* bracket - bialgebra }) and (\ref{u* u* bracket - bialgebra}). However, in this case, the Jacobiator of three functions $f, g, h \in C^{\infty}(M)$ vanishes only up to homotopy, and such a homotopy depends only on the $A^{0, 1}$-term of $\Pi^{-1}$ which is given in terms of the trivector $\chi \in \bigwedge^3 \g$.
%-- clearly, this has to do with the fact that $\chi \in \bigwedge^3 \g$ controls non-integrability of $\pi.$

\subsection{Hamiltonian quasi-Poisson manifolds} Recall that a $G$-manifold $M$ endowed with a \textit{invariant bivector} $\pi \in \mathfrak{X}^2(M)$ is said to be a hamiltonian quasi-Poisson space (\cite{Alekseev}) if 
\begin{equation}\label{non-int of pi hamiltonian quasi-Poisson}
\{\pi, \pi\} = \phi_M \coloneqq \frac{1}{12} \sum_{i, j, k}\langle u^i, [u^j, u^k] \rangle (u^i \wedge u^j \wedge u^j)_M,
\end{equation}
where  $\langle \cdot, \cdot  \rangle$ denotes an invariant inner product on the Lie algebra $\g \coloneqq \text{Lie} (G)$ and $\{u^i\}$ is an orthonormal basis for it, and if there exists an equivariant map $\Phi: M \rightarrow G$ for which we have the moment map condition \begin{equation}\label{G-valued moment condition} 
\pi^{\sharp}(d(\Phi^*f)) = \frac{1}{2} \sum_k \Phi^*\big ( ( u^k_L + u^k_R )f\big) u^k_M, \hspace{0.2cm} f \in C^{\infty}(G),\end{equation}
where $u^k_L$ and $u^k_R$ are, respectively, the left and right-invariant vector fields associated to $u^k \in \g.$\par 

In this context, %Alekseev-Kosmann-Schwarzbach-Meinrenken have proved that 
if the identity $e \in G$ is a regular value of $\Phi: M \rightarrow G$ and the action of $G$ along $\Phi^{-1}(e)$ is free and proper, then the quotient $\Phi^{-1}(e)/G$  inherits a  Poisson structure (\cite[Thm.~6.1]{Alekseev}). In the following, we obtain a homological version of this result.

Let $\mathcal{U} \subset G$ be an open neighborhood of the identity $e \in G$ where $exp: \g \rightarrow G$ is a diffeomorphism, and let $log: \mathcal{U} \rightarrow \g$ denote its local inverse. Then, for each $u^k \in \g, $ define $f^k: \mathcal{U} \rightarrow \mathbb{R} $ by $$f^k(g) \coloneqq \langle \, u^k, log(g) \, \rangle, \hspace{0.2cm} g \in \mathcal{U}. $$ Taking the pullback by the moment map $\Phi: M \rightarrow G,$ we obtain $\Phi^{*}f^k: \Phi^{-1}(\mathcal{U}) \rightarrow \mathbb{R}$ defined in the open neighborhood $\Phi^{-1}(\mathcal{U})$ of $\Phi^{-1}(e).$ From now on, we shall assume that the map $\Phi: M \rightarrow G $ has its image cointained in the neighborhood $\mathcal{U},$ so that  $\Phi^{*}f^k$ is actually a globally defined function on $M$; in general, we can just replace $M $ by $\Phi^{-1}(\mathcal{U}).$ \par 

The vanishing set of the ideal of $C^{\infty}(M)$ spanned by $\{\Phi^*{f^k}\}$ coincides with the level set $\Phi^{-1}(e)$ and,  moreover, we have the following.

\begin{lemma}\label{prop: forms LI along the level of G-moment}
    If $e \in G$ is a regular value of $\Phi: M \rightarrow G,$ then the $1$-forms $\{d(\Phi^*f^k)\}$ are linearly independent along $\Phi^{-1}(e).$
\end{lemma}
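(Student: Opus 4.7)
The plan is to reduce the problem to a pointwise linear-algebra statement at the identity $e \in G$ by using the chain rule, and then exploit regularity of $e$ as a value of $\Phi$.

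First I would compute $df^k|_e$. Since $\log: \mathcal{U} \to \g$ is a local inverse of $\exp$ with $\log(e) = 0$, its differential at $e$ is the identity $T_eG = \g \to \g$. Hence for any $\xi \in T_eG = \g$,
\begin{equation*}
df^k|_e(\xi) = \langle u^k, d\log|_e(\xi) \rangle = \langle u^k, \xi \rangle.
\end{equation*}
Because $\{u^k\}$ is an orthonormal basis of $\g$ with respect to the invariant inner product $\langle \cdot, \cdot \rangle$, the covectors $\{df^k|_e\}$ form a basis of $T_e^*G$.

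Next, I would use the chain rule to transport this to $\Phi^{-1}(e)$. For any $p \in \Phi^{-1}(e)$,
\begin{equation*}
d(\Phi^*f^k)|_p = df^k|_{\Phi(p)} \circ d\Phi|_p = df^k|_e \circ d\Phi|_p.
\end{equation*}
Suppose $\sum_k a_k\, d(\Phi^*f^k)|_p = 0$ for some scalars $a_k$. Then for every $v \in T_pM$ we have $\sum_k a_k\, df^k|_e(d\Phi|_p(v)) = 0$, i.e., the covector $\sum_k a_k\, df^k|_e \in T_e^*G$ vanishes on the image of $d\Phi|_p$.

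The final step invokes regularity: since $e$ is a regular value of $\Phi$, the differential $d\Phi|_p: T_pM \to T_eG$ is surjective, so $\sum_k a_k\, df^k|_e$ vanishes on all of $T_eG$. The linear independence of $\{df^k|_e\}$ established in the first step forces $a_k = 0$ for every $k$, which is exactly what we wanted. The whole argument is essentially a one-step reduction, with no real obstacle beyond identifying the differential of $\log$ at the identity; the invariance of $\langle \cdot, \cdot \rangle$ plays no role here, only its non-degeneracy.
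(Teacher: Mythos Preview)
Your proof is correct and follows essentially the same approach as the paper: compute $df^k|_e$ via the differential of $\log$ at $e$ to see that $\{df^k|_e\}$ is a basis of $T_e^*G$, then use surjectivity of $d\Phi|_p$ (from regularity) together with the chain rule to conclude. The paper carries this out by evaluating on specific preimages $X_k$ with $T_m\Phi(X_k)=u^k$, while you phrase it as vanishing of $\sum_k a_k\,df^k|_e$ on the full image of $d\Phi|_p$; these are the same argument in slightly different packaging.
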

\begin{proof}
Notice that 
\begin{align}
(df^i)_e(u^j) & = \frac{d}{dt}\bigg|_{t = 0} f^i(exp(tu^j)) \label{prop preimage of e} \\
& = \frac{d}{dt}\bigg|_{t = 0} \langle \, u^i, log(exp(tu^j)) \, \rangle = \frac{d}{dt}\bigg|_{t = 0} t\langle u^i, u^j \, \rangle = \delta^{ij} \nonumber.
\end{align}
Then, assume that $\sum_k \alpha_k \,  d(\Phi^*(f^k)) \equiv 0$ along $\Phi^{-1}(e).$ Fix $m \in \Phi^{-1}(e).$ Since $e \in G$ is a regular value, for each $u^k,$ there exists $X_k \in T_{m} M $ satisfying $T_m \Phi (X_k) = u^k.$ From this and (\ref{prop preimage of e}), one easily concludes that $\alpha_k \equiv 0$ along $\Phi^{-1}(e).$
\end{proof}
Now, let  $\cJ$ denote the homogeneous ideal of $C^{\infty}(\cM)$ generated by $\{\Phi^{*}(f^k)\}$ (degree zero) and $\{u^k_M\}$ (degre one). Since $\{u^i_M\}$ are fundamental vector fields of a $G$-action,  it follows  that $\{u^i_M, u^j_M\} = c^{ij}_k u^k_M.$ We claim that $(u^i)_M(\Phi^*f^j) =  c^{ij}_k \Phi^*(f^k).$
Indeed, notice that
\begin{align*}
    u^i_M(\Phi^*f^j) & = \frac{d}{dt}\bigg|_{t = 0} \langle \,  u^j, log\big(\Phi(exp(-tu^i)m) \big) \, \rangle \\
    & = \langle \,  u^j, \frac{d}{dt}\bigg|_{t = 0}  log\big(\Phi(exp(-tu^i)m) \big) \,  \rangle;
\end{align*}
however, since $\Phi: M \rightarrow G$ is equivariant, we have $$\Phi(exp(-tu^i)m) = exp(-tu^i) \Phi(m) exp(tu^i),$$
from which we obtain $$ \frac{d}{dt}\bigg|_{t = 0} \Phi(exp(-tu^i)m) =  \frac{d}{dt}\bigg|_{t = 0}   exp(-tu^i) \Phi(m) exp(tu^i) = u_L^i - u_R^i.$$
Hence, we see that 
\begin{equation} \label{derivative in terms of Tlog}
     (u^i)_M(\Phi^*f^j) = \langle \,  u_j, ( T_{\Phi(m)} \, log) (u_L^i - u_R^i) \, \rangle.
\end{equation}
Then, all we need is to compute $( T_{\Phi(m)} \, log) (u_L^i - u_R^i)$ more explicitly. For this, we shall use the Baker-Campbell-Hausdorff (BCH) formula. In fact, on the Lie group $G,$ we have $$ u_R^i (\Phi(m)) = \frac{d}{dt}\bigg|_{t = 0} exp(tu^i)\Phi(m) \hspace{0.2cm} \text{and} \hspace{0.2cm} u_L^i (\Phi(m)) = \frac{d}{dt}\bigg|_{t = 0} \Phi(m)exp(tu^i);$$ since we are assuming that $\Phi(m) \in G$ lies in a neighbourhood of the identity where $\exp : \g \rightarrow G$ is a diffeomorphism, we obtain that there exists a unique $u_{\Phi(m)}$ for which $exp(u_{\Phi(m)}) = \Phi(m)$ (that is, $log(\Phi(m)) = u_{\Phi(m)} $). Therefore, by the BCH formula,  we have that 
$$u_R^i (\Phi(m)) = \frac{d}{dt}\bigg|_{t = 0} exp(tu^i)\Phi(m) = \frac{d}{dt}\bigg|_{t = 0} exp(z^{tu^i}_{u_{\Phi(m)}}), $$
where \begin{equation}\label{z(tui, uPhi)}
    z^{tu^i}_{u_\Phi(m)} = tu^i + u_{\Phi(m)} + \frac{t}{2} [u^i, u_{\Phi(m)}] + \frac{t}{12} [u_{\Phi(m)}, [u_{\Phi(m)}, u^i]] - \frac{t}{720} ad^4_{u_{\Phi(m)}}(u^i) + \cdots;
\end{equation} similarly for $u_L^i(\Phi(m)),$ but with \begin{equation}\label{z(uPhi, ui)}
    z^{u_{\Phi(m)}}_{tu^i} = u_{\Phi(m)}  + tu^i + \frac{t}{2}[u_{\Phi(m)}, u^i] + \frac{t}{12}[u_{\Phi(m)}, [u_{\Phi(m)}, u^i]] - \frac{t}{720} ad^4_{u_{\Phi(m)}}(u^i) + \cdots.
\end{equation}
Using (\ref{z(tui, uPhi)}) and (\ref{z(uPhi, ui)}), we compute 
\begin{align*}
   (T_{\Phi(m)} log )(u_L^i - u_R^i) & = \frac{d}{dt}\bigg|_{t = 0} \bigg(  log \big( exp(z_{tu^i}^{u_\Phi(m)}) \big) - log \big( exp(z^{tu^i}_{u_\Phi(m)}) \big)   \bigg) \\
   & = \frac{d}{dt}\bigg|_{t = 0} \big( z_{tu^i}^{u_\Phi(m)} -  z^{tu^i}_{u_\Phi(m)} \big)
   =  [u_{\Phi(m)}, u^i].
\end{align*}
Going back to (\ref{derivative in terms of Tlog}), we conclude that 
\begin{align*}
    (u^i)_M(\Phi^*f^j) & = \langle \,  u^j, ( T_{\Phi(m)} \, log) (u_L^i - u_R^i) \, \rangle \\
    & = \langle \, u^j  , [u_{\Phi(m)}, u^i] \, \rangle\\
    & =  \langle \, [u^i, u^j], u_{\Phi(m)} \, \rangle \\
    & =  \langle \, [u^i, u^j],\,  log(\Phi(m)) \, \rangle =  c^{ij}_k \langle \, u^k, \, log(\Phi(m)) \, \rangle =  c^{ij}_k \Phi^*(f^k),
\end{align*}
as we have claimed.
Therefore,  the constraints generating $\cJ$ give a moment map $J^{\sharp}: \ov\g[-1] \rightarrow C^{\infty}(\cM).$ 
It follows that the BRST charge associated to the ideal $\cJ$ is given by $$ Q = u^i_M u^{(0)}_i + \Phi^*(f^j)u^{(-1)}_j - \frac{1}{2}c^{ij}_k u_i^{(0)}u_j^{(0)} u^{k, (0)} - c^{mn}_p u^{(0)}_m u^{(-1)}_n u^{p, (-1)}.$$
\begin{remark}
    An alternative way to verify that $(u^i)_M (\Phi^* f^j) = c^{ij}_k \Phi^*(f^k)$ is to use the fact that there exists a Poisson bivector $\pi_0 \in \mathfrak{X}^2(M)$ for which the map $\mu \coloneqq log \circ \Phi: M \rightarrow \g^* \cong \g$ is a usual moment map for the $G$-action on $M$ (\cite[Cor. 7.3]{Alekseev}). In this case, since $\mu^* u^ k = \Phi^*f^k,$ for $u^k \in C^\infty(\g^*),$ it follows that 
    \begin{align*}
        (u^i)_M (\Phi^* f^j) & = \pi^\sharp_0(d(\mu^*u^i))(\mu^* u^j) \\
        & = \pi_0( d(\mu^*u^i), d(\mu^*u^j) ) \\
        & = \{\mu^*u^i, \mu^*u^j\}_{\pi_0} \\
        & = \mu^*(\{u^i, u^j\}_{\g^*}) = c^{ij}_k \mu^*u^k = c^{ij}_k \Phi^*f^k.
    \end{align*}
\end{remark}

Notice that the moment map condition (\ref{G-valued moment condition}) implies that $\pi \in N(\cJ),$ so the quasi-Poisson structure $\pi$ is compatible with the generalized hamiltonian reduction data associated to the ideal $\cJ.$ On the other hand, the freeness assumption on the $G$-action and Lemma  \ref{prop: forms LI along the level of G-moment} imply that $0 \in \g^*$ is regular value of the moment map $(J, J^\sharp): \cM \rightarrow (\ov\g[-1])^*,$ which assures the existence of a corresponding extended BRST charge ($\S$\ref{subsection: extended brst charge}). Thus, from $\S$\ref{subsection: homotopy structure}, we obtain the following result.

\begin{proposition}
    Let $(M, \pi)$ be a hamiltonian quasi-Poisson space with moment map $\Phi: M \rightarrow G.$ Assume that the identity $e \in G$ is a regular value of $\Phi: M \rightarrow G$ and that the $G$-action on $\Phi^{-1}(e)$ is free and proper. Let $\cU \subset G$ be neighborhood of $e \in G$ where $exp: \g \rightarrow G$ is diffeomorphism and set $M_{\cU} \coloneqq \Phi^{-1}(\cU).$ Then the algebra $$\mathcal{K}^\bullet_{\g, \g} \coloneqq C^{\infty}(M_{\cU}) \otimes \bigwedge^\bullet \g^* \otimes \bigwedge^\bullet \g$$ admits a homotopy Poisson structure for which we have
    $$(H_{\partial}^0(\mathcal{K^\bullet_{\g, \g}}), \{\cdot, \cdot\}_2) \cong \bigg( C^{\infty}\bigg( \frac{\Phi^{-1}(e)}{G}\bigg), \pi_{red} \bigg).$$
\end{proposition}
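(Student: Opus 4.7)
The plan is to recognize this statement as a direct instance of the general homotopy reduction machinery of $\S$\ref{subsection: homotopy structure}, with the input data reorganized so that a hamiltonian quasi-Poisson space provides a regular, compatible reduction pair for a (weak) Poisson structure. Once the input is verified, the homotopy Poisson structure on $\cK^{\bullet}_{\g, \g}$ is produced by the extended BRST construction, and the identification of $H_{\partial}^{0}$ with the classical reduced algebra follows from the general isomorphism (\ref{isomorphism: H00 and the reduced Poisson algebra}) combined with \cite[Thm.~6.1]{Alekseev}.

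First, I would spell out the reduction data on $M_{\cU}$. Together with the infinitesimal $G$-action $\psi: \g \to \mathfrak{X}(M_{\cU})$, $u \mapsto u_M$, the map $\mu \coloneqq \log \circ \Phi : M_{\cU} \to \g^{*} \cong \g$ defines a degree one moment map $J^{\sharp}: \ov\g[-1] \to C^{\infty}(\cM)$ for $\ov\g = \g[1] \oplus \g$; its associated ideal $\cJ \subset C^{\infty}(\cM)$ is generated by the degree zero constraints $\{\Phi^{*} f^{k}\}$ and the degree one constraints $\{u^{k}_{M}\}$. That $J^{\sharp}$ is a morphism of graded Lie algebras is exactly the content of the bracket identities established just before the proposition, namely $\{u^{i}_{M}, u^{j}_{M}\} = c^{ij}_{k}\, u^{k}_{M}$ (from the $\g$-action) and the BCH computation $u^{i}_{M}(\Phi^{*} f^{j}) = c^{ij}_{k}\, \Phi^{*} f^{k}$.

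Next I would verify that $(\psi, \mu)$ is regular and compatible with $\pi$ in the sense of $\S$\ref{subsection: reducible Poisson str}. Regularity follows directly from the hypotheses: Lemma \ref{prop: forms LI along the level of G-moment} shows that the differentials $d(\Phi^{*} f^{k})$ are linearly independent along $\Phi^{-1}(e)$, so $0 \in \g^{*}$ is a regular value of $\mu$, and by assumption the $G$-action on $\Phi^{-1}(e)$ is free and proper. For compatibility, I would check that $\pi \in N(\cJ)$ in two pieces. The invariance of $\pi$ under the $G$-action gives $\{\pi, u^{k}_{M}\} = 0 \in \cJ$, while the $G$-valued moment map condition (\ref{G-valued moment condition}) applied to $f = f^{k}$ expresses $\pi^{\sharp}(d(\Phi^{*} f^{k}))$ as a combination of the fundamental vector fields $u^{k}_{M}$ and hence places it in $\cJ$. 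Finally, the quasi-Poisson identity (\ref{non-int of pi hamiltonian quasi-Poisson}) writes $\{\pi, \pi\} = \phi_{M}$ as a sum of wedge products of the $u^{i}_{M}$, so $\{\pi, \pi\} \in \cJ$ and $\pi$ is a reducible weak Poisson structure.

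With these inputs, the construction of $\S$\ref{subsection: extended brst charge} --- powered by the acyclicity of the Koszul complex (Proposition \ref{acyclicity of koszul complex in degree one}) --- yields an extended BRST charge $S^{(\infty)} = Q_{\ov\g} + \Pi + \sum_{k \geq 1} \Pi^{(-k)}$ satisfying the master equation $\{S^{(\infty)}, S^{(\infty)}\} = 0$, with $\Pi^{(0,0)} = \pi$. Voronov's derived bracket construction, applied exactly as in $\S$\ref{subsection: homotopy structure} via the identification $\cK^{\bullet}_{\g, \g} \cong C^{\infty}(\cN_{1,-1})$, then produces the differential $\partial = \{Q_{\ov\g}, \cdot\}$ and the higher $k$-ary brackets (\ref{1-ary and 2-ary derived bracket}), (\ref{general k-ary derived bracket}), giving the desired homotopy Poisson structure on $\cK^{\bullet}_{\g, \g}$. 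For the final identification, \cite[Thm.~6.1]{Alekseev} supplies the genuine reduced Poisson structure $\pi_{red}$ on $\Phi^{-1}(e)/G$, and the isomorphism (\ref{isomorphism: H00 and the reduced Poisson algebra}) of $\S$\ref{subsection: derived bracket} identifies $(H_{\partial}^{0}(\cK^{\bullet}_{\g, \g}), \{\cdot, \cdot\}_{2})$ with $(C^{\infty}(\Phi^{-1}(e)/G), \pi_{red})$. The only genuine obstacle, and what distinguishes this case from the Poisson one of $\S$\ref{the case of the classical reduction data}, is establishing that $\pi \in N(\cJ)$ despite $\pi$ being merely weak Poisson; this is precisely what the $G$-valued moment condition is designed to do, and once it is unpacked, the proof reduces to invoking the already-developed machinery.
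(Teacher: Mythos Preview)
Your proposal is correct and follows essentially the same route as the paper: you verify that the pair $(\psi, \mu)$ is regular (via Lemma~\ref{prop: forms LI along the level of G-moment} and the freeness hypothesis) and that $\pi$ is a reducible weak Poisson structure (via invariance, the $G$-valued moment condition, and the quasi-Poisson identity), then invoke the general machinery of $\S$\ref{subsection: extended brst charge}--$\S$\ref{subsection: homotopy structure}. The paper's argument is identical in structure, though it leaves the verification of $\{\pi,\pi\}\in\cJ$ implicit; your explicit check of this point is a welcome clarification.
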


This result provides a BFV model for  hamiltonian quasi-Poisson reduction (\cite[Thm.~6.1]{Alekseev}).\par

\end{document}